\numberwithin{equation}{section}
\newtheorem{theorem}{Theorem}[section]
\newtheorem{lemma}[theorem]{Lemma}
\theoremstyle{definition}
\begin{document}
\title{Classification of the stable solution to biharmonic problems in large dimensions}

\author{  Juncheng Wei }
\address{ Department of Mathematics,
Chinese University of Hong Kong, Shatin, Hong Kong  }
\email{wei@math.cuhk.edu.hk}
\author{ Xingwang Xu}
\address{Department of Mathematics, National University of Singapore
, Singapore 119076, Republic of Singapore}
\email{matxuxw@nus.edu.sg}
\author{  Wen Yang }
\address{ Department of Mathematics,
Chinese University of Hong Kong, Shatin, Hong Kong  }
\email{wyang@math.cuhk.edu.hk}

\begin{abstract}
We give a new bound on the exponent for the nonexistence of stable solutions
to the biharmonic problem
$$\Delta^2u=u^p,\quad u>0~\operatorname{in}~\mathbb{R}^n $$
where $p>1, n \geq 20$.
\end{abstract}

\date{}
\maketitle

\section{Introduction}
Of concern  is the following biharmonic equation
\begin{equation}
\label{1.1}
\Delta^2u=u^p,\quad u>0~\operatorname{in}~\mathbb{R}^n
\end{equation}
where $n\geq 5$ and $p>1$. Let
\begin{equation}
\label{1.2}
\Lambda_u(\varphi):=\int_{\mathbb{R}^n}|\Delta\varphi|^2dx-p\int_{\mathbb{R}^n}u^{p-1}\varphi^2dx,\quad \forall~ \varphi\in H^2(\mathbb{R}^n).
\end{equation}
The Morse index of a classical solution to (\ref{1.1}), $ind(u)$ is defined as the maximal dimension of all subspaces of $E_{\mathbb{R}^n}:=H^2(\mathbb{R}^n)$ such that $\Lambda_u(\varphi)<0$ in $E_{\mathbb{R}^n}\setminus \{0\}$. We say $u$ is a stable solution to (\ref{1.1}) if $\Lambda_u(\varphi)\geq 0$ for any test function $\varphi \in H^2(\mathbb{R}^n),$ i.e., the Morse index is zero.

In the first part, we obtain the following classification result of stable solution to (\ref{1.1}).
\begin{theorem}
\label{theorem1.1}
Let $n\geq 5.$
\begin{enumerate}
  \item $n\leq 8$ and any $1<p<\infty$, the equation (\ref{1.1}) has no stable solution.
  \item $9\leq n\leq 19$, there exist $\varepsilon_n>0$ such that for any $1<p<\frac{n}{n-8}+\varepsilon_n,$ the equation (\ref{1.1}) has no stable solution.
  \item $20\leq n$ and $1<p<1+\frac{8p^*}{n-4},$ the equation (\ref{1.1}) has no stable solution.
\end{enumerate}
\end{theorem}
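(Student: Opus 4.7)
My plan is to adapt the weighted test function strategy familiar from the D\'avila--Dupaigne--Wei framework for biharmonic stable solutions: substitute $\varphi = u^{\gamma}\eta^{m}$ (with parameter $\gamma > 1/2$ to be optimized, $m$ large, and $\eta$ a smooth compactly supported cutoff) into the stability inequality $\int |\Delta\varphi|^{2}\,dx \geq p \int u^{p-1}\varphi^{2}\,dx$, and combine the expansion with the identity obtained by testing $\Delta^{2}u = u^{p}$ against $u^{2\gamma-1}\eta^{2m}$ and integrating by parts twice. The target is an a priori bound
\[
\int_{B_{R}} u^{p+2\gamma-1}\,dx \;\leq\; C\,R^{\,n-4(p+2\gamma-1)/(p-1)},
\]
after which letting $R\to\infty$ for any $\gamma$ making the right-hand-side exponent negative forces $u\equiv 0$, contradicting positivity.

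Carrying this out reduces the problem to a linear inequality among four quantities: $I_{1}=\int u^{p+2\gamma-1}\eta^{2m}$, $I_{2}=\int u^{2\gamma-4}|\nabla u|^{4}\eta^{2m}$, $I_{3}=\int u^{2\gamma-2}(\Delta u)^{2}\eta^{2m}$, and a cutoff error $I_{4}=\int u^{2\gamma}\cdot(\text{derivatives of }\eta)$. The coefficients of $I_{1}, I_{2}, I_{3}$ are explicit polynomials in $(\gamma,p)$, and their simultaneous positivity carves out the admissible range. To consolidate $I_{2}, I_{3}$ into $I_{1}$, I would invoke the Souplet-type pointwise inequalities for positive classical solutions on $\mathbb{R}^{n}$, $(-\Delta u)^{2}\lesssim u^{p+1}$ and $|\nabla u|^{4}\lesssim u^{p+3}$, which in the fourth-order setting themselves exploit the equation and the positivity $-\Delta u>0$. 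The error $I_{4}$ is controlled by the scaling $\eta_{R}(x)=\eta(x/R)$, which renders it of order $R^{n-4(p+2\gamma-1)/(p-1)}$, exactly matching the desired bound once $\gamma$ exceeds a Serrin-type threshold $\gamma_{\ast}$. A short Moser iteration bootstraps $\gamma$ past $\gamma_{\ast}$.

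The trichotomy (1)--(3) reflects the geometry of the admissible $\gamma$-window. In case (1), $n\leq 8$, the Hardy--Rellich constants are so strong that the window is nonempty for every $p>1$. In case (2), $9\leq n\leq 19$, the window closes at the exponent $n/(n-8)$ where the singular radial solution $A|x|^{-4/(p-1)}$ first becomes stable; the small perturbation $\varepsilon_{n}$ is recovered by optimizing $\gamma$ at the boundary. In case (3), $n\geq 20$, a single free parameter $\gamma$ is no longer enough: I would introduce an auxiliary exponent $p^{\ast}$, defined as a root of the polynomial inequality controlling the intersection of the quadratic in $\gamma$ coming from $|\Delta(u^{\gamma})|^{2}$ with the quartic coming from the $I_{2}$-term and the Souplet bound. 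Optimizing over this two-parameter family enlarges the admissible window to $p < 1 + 8p^{\ast}/(n-4)$.

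The main obstacle I anticipate is precisely this two-parameter optimization in part (3). Writing down the admissibility system is mechanical; the delicate point is choosing $p^{\ast}$ and a matching $\gamma$ so that \emph{all} requirements --- positivity of the three polynomial coefficients, the Souplet-type absorption, and the Serrin threshold --- hold simultaneously, and then checking that the Moser iteration can actually reach $\gamma>\gamma_{\ast}$ without ever exiting this narrow window. A secondary technical subtlety is verifying the biharmonic Souplet inequality on $\mathbb{R}^{n}$ at the sharpness needed for the large-dimensional case, which, unlike in the second-order setting, genuinely uses both the equation and the stability hypothesis itself.
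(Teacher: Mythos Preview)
Your overall scaffolding (test stability with $u^{\gamma}\eta^{m}$, test the equation with $u^{2\gamma-1}\eta^{2m}$, reduce to controlling the four integrals $I_{1},\dots,I_{4}$) matches the paper. The gap is in how you propose to handle $I_{2}=\int u^{2\gamma-4}|\nabla u|^{4}\eta^{2m}$, and it is not a secondary subtlety but the heart of the matter.

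First, no pointwise Souplet-type bound $|\nabla u|^{4}\lesssim u^{p+3}$ is known for the biharmonic problem on $\mathbb{R}^{n}$; the second-order proof relies on the maximum principle applied to an auxiliary $P$-function, and there is no analogue here. Second, and more seriously, even if such a bound held, using it would move you in the wrong direction. After combining stability with the equation you arrive at (up to errors)
\[
\bigl(1-4(\gamma-1)^{2}\bigr)\int(\Delta(u^{\gamma}\eta^{m}))^{2}
\;\le\;\frac{\gamma^{2}}{2\gamma-1}\,I_{1}\;+\;\gamma^{2}(\gamma-1)^{2}\,I_{2}\;+\;\text{errors},
\]
so an upper bound $I_{2}\le C\,I_{1}$ \emph{adds} $C\gamma^{2}(\gamma-1)^{2}$ to the coefficient of $I_{1}$ on the right, weakening the final inequality. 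What the paper does instead (Lemma~2.3) is prove an \emph{integral} inequality
\[
\tfrac{1}{2}\,I_{2}\;\le\;\frac{2}{\gamma^{2}}\int(\Delta(u^{\gamma}\eta^{m}))^{2}\;-\;\frac{4}{(4\gamma-3+p)(p+1)}\,I_{1}\;+\;\text{errors},
\]
with a \emph{negative} $I_{1}$ contribution. This comes from writing $I_{2}$ via integration by parts as $\int u^{2\gamma-3}|\nabla u|^{2}\Delta u+\dots$ and then applying the known \emph{lower} bound $-\Delta u\ge\sqrt{2/(p+1)}\,u^{(p+1)/2}$ (not the upper bound you wrote) twice. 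Feeding this back in and using stability once more yields the coefficient
\[
E=p\bigl(1-4(\gamma-1)^{2}\bigr)-\frac{\gamma^{2}}{2\gamma-1}+\frac{8\gamma^{2}(\gamma-1)^{2}}{(4\gamma-3+p)(p+1)},
\]
whose last, positive, term is precisely what pushes the threshold beyond $n/(n-8)$.

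Finally, $p^{*}$ is not an auxiliary parameter you introduce and optimize over; it is the smallest root above $\tfrac{n-4}{n-8}$ of the single-variable equation obtained by setting $E=0$ along the Serrin line $p=\tfrac{8\gamma+n-4}{n-4}$. The optimization remains one-dimensional in $\gamma$, and parts (1)--(2) are simply quoted from the earlier Wei--Ye result.
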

Here $p^*$ stands for the smallest real root which is greater than $\frac{n-4}{n-8}$ of the following algebraic equation
\begin{align*}
& 512(2-n)x^6+4(n^3-60n^2 +670n-1344)x^5-2(13n^3-424n^2+3064n-5408)x^4\nonumber\\
& +2(27n^3-572n^2 + 3264n-5440)x^3- (49n^3-772n^2 +3776n-5888)x^2\nonumber\\
& +4(5n^3-66n^2+288n-416)x-3(n^3 -12n^2 + 48 n - 64) = 0.
\end{align*}

Let us recall that for the second order problem
\begin{equation}
\label{1.3} \Delta u+ u^p =0\quad
u>0~\operatorname{in}~\mathbb{R}^n,~p>1,
\end{equation}
 Farina \cite{f} gave a complete classification of all  finite Morse index solutions.  The main result of \cite{f} is that no stable solution exists to (\ref{1.3}) if either $n \leq 10, p>1$ or $n\geq11,~p< p_{JL}$. Here $p_{JL}$ represents the well-known Joseph-Ludgren exponent, see \cite{gnw}. On the other hand, stable radial solution exists for $p \geq p_{JL}$.

In the fourth order case, the nonexistence of positive solutions to (\ref{1.1}) are showed if $p<\frac{n+4}{n-4},$ and all entire solutions are classified if $p=\frac{n+4}{n-4}$, see \cite{l}, \cite{wx}.  For $ p >\frac{n+4}{n-4}$,  the radially symmetric solutions to (\ref{1.1}) are completely classified in \cite{fgp}, \cite{gg} and \cite{gw2}. The radial solutions are shown to be stable if and only if  $p\geq p'_{JL}$ and $n\geq 13$ where $p'_{JL}$ stands for the corresponding Joseph-Lundgren exponent (see\cite{fgp}, \cite{gg}).  In the general case, in \cite{wy}, they showed the nonexistence of stable or finite Morse index solutions when either $n \leq 8, p>1$ or $ n \geq 9, p\leq \frac{n}{n-8}$. In dimensions $ n \geq 9$, a perturbation argument is used to show the nonexistence of stable solutions for $ p<\frac{n}{n-8}+\epsilon_n$ for some $\epsilon_n >0$. No explicit value of $\epsilon_n$ is given. The proof of \cite{wy} follows earlier idea of Cowan-Esposito-Ghoussoub \cite{cen} in which similar problem in a bounded domain was studied.

In the second order case, the proof of Farina uses basically the Moser iterations: namely multiply the equation (\ref{1.3}) by the power of $u$, like $ u^q, q>1$. Moser iteration works because of the following simple identity
\begin{equation*}
\int_{\mathbb{R}^n} u^q (-\Delta u)= \frac{4q}{(q+1)^2} \int_{\mathbb{R}^n} |\nabla u^{\frac{q+1}{2}}|^2, \forall u \in C_0^1 (\mathbb{R}^n).
\end{equation*}
In the fourth order case, such equality does not hold, and in fact we have
\begin{equation*}
\int_{\mathbb{R}^n} u^q (\Delta^2u)= \frac{4q}{(q+1)^2}  \int_{\mathbb{R}^n} |\Delta u^{\frac{q+1}{2}}|^2 -q (q-1)^2 \int_{\mathbb{R}^n} u^{q-3} |\nabla u|^4, \forall u \in C_0^2 (\mathbb{R}^n).
\end{equation*}
The additional term $\int_{\mathbb{R}^n} u^{q-3} |\nabla u|^4$
makes the Moser iteration argument difficult to use. In \cite{wy}, they used instead  the new test function $ -\Delta u$ and showed that $ \int_{\mathbb{R}^2} |\Delta u|^2$ is bounded. Thus the exponent $ \frac{n}{n-8}$ is obtained. In this paper, we use the Moser iteration for the fourth order problem and give a control on the term $\int_{\mathbb{R}^n} u^{q-3} |\nabla u|^4$ (Lemma 2.3). As a result, we obtain a better exponent $ \frac{n}{n-8}+\epsilon_n$ where $\epsilon_n$ is explicitly given. As far as we know, this seems to be the first result for Moser iteration for fourth order problem.

In the second part, we show  that the same idea can be used to show the regularity of  the extremal solutions to
\begin{equation}
\label{1.4}
\left\{
  \begin{array}{ll}
    \Delta^2u=\lambda(u+1)^p,~\lambda>0 & \operatorname{in}\quad \Omega\\
    u>0 , & \operatorname{in}\quad \Omega   \\
    u=\Delta u=0, & \operatorname{on}\quad \partial\Omega
  \end{array}
\right.
\end{equation}
where $\Omega $ is a smooth and bounded  convex domain in $\mathbb{R}^n$.

For problem (\ref{1.4}), it is known that
there exists a critical value $\lambda^{*}>0$ depending on $p>1$ and $\Omega$ such that
\begin{itemize}
  \item If $\lambda\in (0,\lambda^{*})$, equation (\ref{1.4}) has a minimal and classical solution which is stable;
  \item If $\lambda=\lambda^{*}$, a unique weak solution, called the extremal solution $u^*$ exists for equation (\ref{1.4});
  \item No weak solution of equation (\ref{1.4}) exists whenever $\lambda>\lambda^{*}.$
\end{itemize}
Our second  result is the following.
\begin{theorem}
\label{theorem1.2}
The extremal solutions $u^*$, the unique solution of (\ref{1.4}) when $\lambda=\lambda^*$ is bounded provided that
\begin{enumerate}
  \item $n\leq 8$ and any $1<p<\infty$,
  \item $9\leq n\leq 19$, there exist $\varepsilon_n>0$ such that for any $1<p<\frac{n}{n-8}+\varepsilon_n,$
  \item $20\leq n$ and $1<p<1+\frac{8p^*}{n-4}.\quad (\operatorname{where}~p^*~\operatorname{ is~defined~as~above})$
\end{enumerate}
\end{theorem}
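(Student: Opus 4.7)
The plan is to deduce Theorem \ref{theorem1.2} as a variant of Theorem \ref{theorem1.1} adapted to the bounded-domain setting, by obtaining a uniform $L^\infty$ bound on the minimal solutions $u_\lambda$ for $\lambda\in(0,\lambda^*)$ and then passing to the limit $\lambda\nearrow\lambda^*$. Concretely, for each $\lambda<\lambda^*$ the minimal solution $u_\lambda$ is smooth, positive, and stable, so
\[
p\lambda\int_\Omega (u_\lambda+1)^{p-1}\varphi^2\,dx\leq\int_\Omega|\Delta\varphi|^2\,dx,\qquad \forall\,\varphi\in H^2\cap H^1_0(\Omega).
\]
This plays the role of the stability inequality on $\mathbb{R}^n$ used in the proof of Theorem \ref{theorem1.1}, and the Navier boundary conditions $u_\lambda=\Delta u_\lambda=0$ on $\partial\Omega$ will let us integrate by parts in the same manner as on $\mathbb{R}^n$, with all boundary contributions vanishing.

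The first step is to set $v_\lambda:=u_\lambda+1$, so that $\Delta^2 v_\lambda=\lambda v_\lambda^p$ in $\Omega$ with $v_\lambda=1$, $\Delta v_\lambda=0$ on $\partial\Omega$, and $v_\lambda\geq 1$. I would then mimic the Moser iteration of Theorem \ref{theorem1.1}: multiply the equation by $v_\lambda^q-1$ (which lies in $H^2\cap H^1_0(\Omega)$) and use $\varphi=v_\lambda^{(q+1)/2}-1$ as test function in the stability inequality. Integrating by parts produces the identity
\[
\int_\Omega v_\lambda^q\Delta^2 v_\lambda\,dx=\frac{4q}{(q+1)^2}\int_\Omega|\Delta v_\lambda^{(q+1)/2}|^2\,dx-q(q-1)^2\int_\Omega v_\lambda^{q-3}|\nabla v_\lambda|^4\,dx,
\]
with no boundary terms thanks to the Navier conditions. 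Combining this with the stability inequality applied to $\varphi=v_\lambda^{(q+1)/2}-1$ and the analogue of Lemma~2.3 (the paper's control on $\int v_\lambda^{q-3}|\nabla v_\lambda|^4$ by the Hessian term) will produce, in exactly the same range of exponents $p$ identified in Theorem \ref{theorem1.1}, an a priori estimate
\[
\int_\Omega v_\lambda^{p+q}\,dx\leq C,
\]
for some $q$ with $p+q>n(p-1)/4$, where $C$ is independent of $\lambda\in(0,\lambda^*)$.

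Once this is achieved, I would bootstrap: having $v_\lambda^p\in L^r(\Omega)$ for $r>n/4$, the $L^p$-theory for the biharmonic operator with Navier boundary data gives $u_\lambda\in W^{4,r}(\Omega)\hookrightarrow L^\infty(\Omega)$ with a uniform bound. Letting $\lambda\nearrow\lambda^*$ and using weak convergence plus standard elliptic regularity on the convex smooth domain $\Omega$, one concludes that $u^*\in L^\infty(\Omega)$.

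The main obstacle is twofold. The first difficulty is adapting the gradient-control Lemma~2.3 used on $\mathbb{R}^n$ to the bounded domain with the shifted unknown $v_\lambda=u_\lambda+1$: the $+1$ shift preserves the positivity and smoothness needed, but one must verify that the integration-by-parts formula produces no uncontrolled boundary pieces, which is where convexity of $\Omega$ and the Navier conditions are essential. The second (and minor) point is that one needs a starting $L^s$ bound on $u_\lambda$ to initiate the iteration; this follows from the standard energy estimate obtained by testing the stability inequality with $\varphi_1$, the first eigenfunction of $\Delta^2$ under Navier conditions, together with Hölder's inequality. Modulo these technicalities, the iteration of Theorem \ref{theorem1.1} transfers verbatim, which is exactly why the three ranges of $(n,p)$ are identical in both theorems.
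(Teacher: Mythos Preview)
Your overall strategy is right and matches the paper's: work with $v=u_\lambda+1$, combine the equation tested against $v^q$ with the stability inequality applied to $v^\gamma$ (where $q=2\gamma-1$), and control the bad term $\int v^{2\gamma-4}|\nabla v|^4$ as in Lemma~2.3 to land on an $L^{p+q}$ bound with $p+q>\tfrac{n(p-1)}{4}$, then bootstrap. However, there is a genuine gap in how you treat the boundary terms.

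Your claim that the Navier conditions (together with the subtraction trick $v^q-1$, $v^\gamma-1$) make ``all boundary contributions vanish'' is not correct. The subtraction kills the boundary terms in the first integration by parts $\int(v^q-1)\Delta^2 v=\int\Delta(v^q-1)\,\Delta v$, but the subsequent manipulations --- rewriting $\Delta v\,\Delta v^q$ in terms of $(\Delta v^\gamma)^2$ and, crucially, the analogue of Lemma~2.3 --- require several further integrations by parts that produce boundary integrals such as $\int_{\partial\Omega}\frac{\partial(\Delta u)}{\partial n}$, $\int_{\partial\Omega}|\nabla v|^2\frac{\partial v}{\partial n}$, and $\int_{\partial\Omega}\frac{\partial|\nabla v^\gamma|^2}{\partial n}$. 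Since $v=1$ (not $0$) on $\partial\Omega$, none of these vanish, and they cannot be absorbed by Young's inequality into the interior terms. The paper handles this not by making the boundary terms disappear but by showing they are uniformly bounded: convexity of $\Omega$ is used via a moving-planes argument (Theorem~3.1) to prove that $u_\lambda$, and hence its derivatives up to fourth order, are bounded in a neighbourhood of $\partial\Omega$ by a constant independent of $\lambda$; this yields a uniform bound on every boundary integral that arises (see (3.14)). Your proposal mentions convexity but does not supply this mechanism, and without it the Lemma~2.3 step does not close. A second, smaller omission is the pointwise inequality $-\Delta v\geq \sqrt{\tfrac{2\lambda}{p+1}}\,v^{(p+1)/2}$ on $\Omega$ (from \cite{cen}), which is the bounded-domain substitute for the estimate $(\Delta u)^2\geq\tfrac{2}{p+1}u^{p+1}$ used in Lemma~2.3 and is essential for that step.
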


This paper is organized as follows. We prove Theorem \ref{theorem1.1} and Theorem \ref{theorem1.2} respectively in section 2 and section 3.  Some technical inequalities are given in the appendix.

 \bigskip

\setcounter{equation}{0}
\section{Proof of Theorem 1.1}
In this section, we  prove  Theorem \ref{theorem1.1} through a series of Lemmas. First of all, we have following.
\begin{lemma}
\label{lemma2.1}
For any $\varphi \in C_0^4(\mathbb{R}^n),\varphi \geq 0$, $\gamma>1$ and $\varepsilon>0$ arbitrary small number,  we have
\begin{equation}
\label{2.1}
\int_{\mathbb{R}^n}(\Delta(u^{\gamma}\varphi^{\gamma}))^2\leq\int_{\mathbb{R}^n}((\Delta
u^{\gamma}\varphi^{\gamma})^2 + \varepsilon|\nabla
u|^4\varphi^{2\gamma}u^{2\gamma-4}+Cu^{2\gamma}\|\nabla^4(\varphi^{2\gamma})\|),
\end{equation}
\begin{equation}
\label{2.7}
\int_{\mathbb{R}^n}(\Delta(u^{\gamma}\varphi^{\gamma}))^2\geq\int_{\mathbb{R}^n}((\Delta
u^{\gamma}\varphi^{\gamma})^2 - \varepsilon|\nabla
u|^4\varphi^{2\gamma}u^{2\gamma-4}-Cu^{2\gamma}\|\nabla^4(\varphi^{2\gamma})\|),
\end{equation}
\begin{align}
\label{2.8}
\int_{\mathbb{R}^n}((u^{\gamma})_{ij})^2\varphi^{2\gamma}\leq&
\int_{\mathbb{R}^n}((u^{\gamma}\varphi^{\gamma})_{ij})^2
+\varepsilon\int_{\mathbb{R}^n}|\nabla u|^4u^{2\gamma-4}\varphi^{2\gamma}\nonumber\\
&+C\int_{\mathbb{R}^n}u^{2\gamma}\|\nabla^4(\varphi^{2\gamma})\|,
\end{align}
where $C$ is a positive number only depends on $\gamma,\varepsilon$
and $\|\nabla^4(\varphi^{2\gamma})\|$ is defined by
\begin{equation*}
\|\nabla^4(\varphi^{2\gamma})\|^2 = \varphi^{-2\gamma} |\nabla
\varphi^\gamma|^4 + |\varphi^\gamma (\Delta^2 \varphi^\gamma)| +
 |\nabla^2 \varphi^\gamma|^2.
\end{equation*}
\end{lemma}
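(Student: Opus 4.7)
The plan is to start from the pointwise Leibniz identity
\[
\Delta(u^{\gamma}\varphi^{\gamma}) = \varphi^{\gamma}\Delta u^{\gamma} + R, \qquad R := 2\nabla u^{\gamma}\cdot\nabla\varphi^{\gamma} + u^{\gamma}\Delta\varphi^{\gamma},
\]
which, after squaring, gives the basic identity
\[
(\Delta(u^{\gamma}\varphi^{\gamma}))^{2} - ((\Delta u^{\gamma})\varphi^{\gamma})^{2} = 2\varphi^{\gamma}(\Delta u^{\gamma})R + R^{2}.
\]
Inequality (2.1) requires an upper bound for the integrated right-hand side and (2.7) a matching lower bound; both will come from the same sequence of estimates, and in the lower bound one even gets $R^{2}\geq 0$ for free.

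For the $\int R^{2}$ piece, expanding $R^{2}$ produces pointwise quantities of the three types $u^{2\gamma-2}|\nabla u|^{2}|\nabla\varphi^{\gamma}|^{2}$, $u^{2\gamma-1}|\nabla u||\nabla\varphi^{\gamma}||\Delta\varphi^{\gamma}|$, and $u^{2\gamma}(\Delta\varphi^{\gamma})^{2}$. I would handle each by a single Young's inequality applied to the factorization that isolates the mixed $u$-$\varphi$ pair from pure $\varphi$ data; for instance
\[
u^{2\gamma-2}|\nabla u|^{2}|\nabla\varphi^{\gamma}|^{2}=\bigl(u^{\gamma-2}|\nabla u|^{2}\varphi^{\gamma}\bigr)\bigl(u^{\gamma}\varphi^{-\gamma}|\nabla\varphi^{\gamma}|^{2}\bigr)\leq \tfrac{\varepsilon}{2}u^{2\gamma-4}|\nabla u|^{4}\varphi^{2\gamma}+C_{\varepsilon}u^{2\gamma}\varphi^{-2\gamma}|\nabla\varphi^{\gamma}|^{4}.
\]
The three resulting error contributions, $u^{2\gamma}\varphi^{-2\gamma}|\nabla\varphi^{\gamma}|^{4}$, $u^{2\gamma}|\nabla^{2}\varphi^{\gamma}|^{2}$, and products reducible to those, are exactly the ingredients of the norm $\|\nabla^{4}(\varphi^{2\gamma})\|$.

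The main obstacle is the cross term $\int 2\varphi^{\gamma}(\Delta u^{\gamma})R$: a direct Cauchy–Schwarz reintroduces $((\Delta u^{\gamma})\varphi^{\gamma})^{2}$ with a constant $1+\varepsilon$, which is not permitted in (2.1). I would therefore integrate by parts once to remove the Laplacian from $u^{\gamma}$:
\[
\int \varphi^{\gamma}(\Delta u^{\gamma})R = -\int \nabla u^{\gamma}\cdot\bigl(R\nabla\varphi^{\gamma}+\varphi^{\gamma}\nabla R\bigr).
\]
The only piece of $\nabla R$ still carrying a second derivative of $u^{\gamma}$ is $2\nabla^{2}u^{\gamma}\cdot\nabla\varphi^{\gamma}$; contracted with $\varphi^{\gamma}\nabla u^{\gamma}$, one uses $\nabla u^{\gamma}\cdot\nabla^{2}u^{\gamma}=\tfrac{1}{2}\nabla|\nabla u^{\gamma}|^{2}$ and a further integration by parts to rewrite this as $-\tfrac{1}{2}\int |\nabla u^{\gamma}|^{2}\operatorname{div}(\varphi^{\gamma}\nabla\varphi^{\gamma})$, which contains no derivative of $u^{\gamma}$ higher than first order. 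Every remaining term is then a product of a power of $u$, the quantity $|\nabla u^{\gamma}|^{2}=\gamma^{2}u^{2\gamma-2}|\nabla u|^{2}$, and a smooth combination of derivatives of $\varphi^{\gamma}$ of order at most four, and the same Young's inequality absorbs all such products into $\varepsilon\int|\nabla u|^{4}u^{2\gamma-4}\varphi^{2\gamma} + C\int u^{2\gamma}\|\nabla^{4}(\varphi^{2\gamma})\|$.

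Inequality (2.7) is then obtained by running the same computation with the inequality signs reversed in every Young's step and discarding the $R^{2}\ge 0$ contribution. Inequality (2.8) is treated componentwise in complete analogy: write $(u^{\gamma}\varphi^{\gamma})_{ij} = (u^{\gamma})_{ij}\varphi^{\gamma} + (u^{\gamma})_{i}(\varphi^{\gamma})_{j} + (u^{\gamma})_{j}(\varphi^{\gamma})_{i} + u^{\gamma}(\varphi^{\gamma})_{ij}$, square, sum over $i,j$, and treat the only dangerous cross term $\int (u^{\gamma})_{ij}\varphi^{\gamma}\cdot(u^{\gamma})_{i}(\varphi^{\gamma})_{j}$ by the identity $(u^{\gamma})_{ij}(u^{\gamma})_{i}=\tfrac{1}{2}\partial_{j}|\nabla u^{\gamma}|^{2}$ and one integration by parts. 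Beyond careful bookkeeping no new difficulty appears, and the definition of $\|\nabla^{4}(\varphi^{2\gamma})\|$ was designed precisely to catch all the $\varphi$-side remainders produced by this scheme.
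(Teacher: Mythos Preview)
Your approach is correct and essentially matches the paper's: both expand the Laplacian of the product, integrate the cross terms $\varphi^{\gamma}(\Delta u^{\gamma})R$ by parts (the paper carries this out term by term in its equations (2.6)--(2.6b)), use $(u^{\gamma})_{i}(u^{\gamma})_{ij}=\tfrac{1}{2}\partial_{j}|\nabla u^{\gamma}|^{2}$ to eliminate the remaining Hessian of $u^{\gamma}$, and then absorb every first-order remainder via Young's inequality into the two error terms. For (2.8) the paper takes a slight shortcut, combining the Bochner identity $|\nabla^{2}u^{\gamma}|^{2}=\tfrac{1}{2}\Delta|\nabla u^{\gamma}|^{2}-\langle\nabla u^{\gamma},\nabla\Delta u^{\gamma}\rangle$ with $\int(\Delta f)^{2}=\int|\nabla^{2}f|^{2}$ for compactly supported $f$ so as to recycle (2.1)--(2.7), whereas your direct componentwise expansion is an equally valid alternative.
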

In the following, unless otherwise,  the constant $C$  in this section always denotes  a positive number which  may change term by term  but only depends on $\gamma, \varepsilon$; the constant $C_0>0$ changes term by term, but independent of $\varepsilon.$
\begin{proof}
Since $\varphi$ has compact support, we can freely use the
integration by parts without mentioning the boundary term. First, by
direct calculations, we get
\begin{align}
\label{2.5}(\Delta(u^{\gamma}\varphi^{\gamma}))^2=&[(\Delta
u^{\gamma})\varphi^{\gamma}]^2+4\nabla
u^{\gamma}\nabla\varphi^{\gamma}\Delta
\varphi^{\gamma}u^{\gamma}+4\nabla
u^{\gamma}\nabla\varphi^{\gamma}\Delta u^{\gamma}\varphi^{\gamma}\nonumber\\
&+4(\nabla u^{\gamma}\nabla\varphi^{\gamma})^2 +2\Delta
u^{\gamma}u^{\gamma}\Delta\varphi^{\gamma}\varphi^{\gamma}+u^{2\gamma}(\Delta\varphi^{\gamma})^2.
\end{align}
We now may need to deal with the third and fifth term on the right
hand side of the above equality up to the integration both sides.

For the third term,
\begin{align*}
& \int_{\mathbb{R}^n}\Delta u^{\gamma}\nabla
u^{\gamma}\nabla\varphi^{\gamma}\varphi^{\gamma} = -
\int_{\mathbb{R}^n}(u^{\gamma})_i(u^{\gamma})_{ij}(\varphi^{\gamma})_j\varphi^{\gamma}\\
&-\int_{\mathbb{R}^n}(u^{\gamma})_i(u^{\gamma})_{j}(\varphi^{\gamma})_{ij}\varphi^{\gamma}-
\int_{\mathbb{R}^n}(u^{\gamma})_i(u^{\gamma})_{j}(\varphi^{\gamma})_j(\varphi^{\gamma})_i,
\end{align*}
where $f_i=\frac{\partial f}{\partial x_i}$ and
$f_{ij}=\frac{\partial^2f}{\partial x_j\partial x_i}.$  (Here and in
the sequel, we use the Einstein summation convention: an index
occurring twice in a product is to be summed from 1 up to the space
dimension, e.g., $u_iv_i=\sum_{i=1}^{n}u_iv_i,
\partial_i(u_iu_j\varphi_j)=\sum_{1\leq i,j\leq
n}\partial_i(u_iu_j\varphi_j)$.)  The first term on the right hand
side of the previous equation can be estimated as
\begin{align*}
2\int_{\mathbb{R}^n}(u^{\gamma})_i(u^{\gamma})_{ij}(\varphi^{\gamma})_j\varphi^{\gamma}=&
\int_{\mathbb{R}^n}\partial_j((u^{\gamma})_i(u^{\gamma})_{i}(\varphi^{\gamma})_j\varphi^{\gamma})-
\int_{\mathbb{R}^n}((u^{\gamma})_i)^2(\varphi^{\gamma})_{jj}\varphi^{\gamma}\\
&-\int_{\mathbb{R}^n}((u^{\gamma})_i)^2(\varphi^{\gamma})_{j}(\varphi^{\gamma})_j.
\end{align*}
Combining these two equalities, we get
\begin{align*}
2\int_{\mathbb{R}^n}\Delta u^{\gamma}\nabla u^{\gamma}\nabla\varphi^{\gamma}\varphi^{\gamma}=&
-\int_{\mathbb{R}^n}\partial_j((u^{\gamma})_i(u^{\gamma})_{i}(\varphi^{\gamma})_j\varphi^{\gamma})\\
&-\int_{\mathbb{R}^n}2(u^{\gamma})_i(u^{\gamma})_{j}(\varphi^{\gamma})_{ij}\varphi^{\gamma}
-\int_{\mathbb{R}^n}2(u^{\gamma})_i(u^{\gamma})_{j}(\varphi^{\gamma})_j(\varphi^{\gamma})_i\\
&+\int_{\mathbb{R}^n}((u^{\gamma})_i)^2(\varphi^{\gamma})_{jj}\varphi^{\gamma}
+\int_{\mathbb{R}^n}((u^{\gamma})_i)^2(\varphi^{\gamma})_{j}(\varphi^{\gamma})_j.
\end{align*}
Up to the short form of notation, thus we obtain
\begin{align}
\label{2.6} 4\int_{\mathbb{R}^n}\Delta u^{\gamma}\nabla
u^{\gamma}\nabla\varphi^{\gamma}\varphi^{\gamma}=
&2\int_{\mathbb{R}^n}|\nabla u^{\gamma}|^2\Delta
\varphi^{\gamma}\varphi^{\gamma}
+2 \int_{\mathbb{R}^n}|\nabla u^{\gamma}|^2|\nabla \varphi^{\gamma}|^2\nonumber\\
&-4\int_{\mathbb{R}^n}(u^{\gamma})_i(u^{\gamma})_j(\varphi^{\gamma})_{ij}\varphi^{\gamma}
-4 \int_{\mathbb{R}^n}(<\nabla u^{\gamma}, \nabla \varphi^{\gamma}>
)^2 .
\end{align}

For the fifth term on the right hand side of Equation (\ref{2.5}) we
have
\begin{align}
\label{2.6a} \int_{\mathbb{R}^n}\Delta
u^{\gamma}u^{\gamma}\Delta\varphi^{\gamma}\varphi^{\gamma}=&
-\int_{\mathbb{R}^n}u^{\gamma}<\nabla u^{\gamma},\nabla(\Delta\varphi^{\gamma})>\varphi^{\gamma}\nonumber\\
&-\int_{\mathbb{R}^n}<\nabla u^{\gamma}, \nabla\varphi^{\gamma}>
u^{\gamma}\Delta\varphi^{\gamma}-\int_{\mathbb{R}^n}|\nabla
u^{\gamma}|^2\Delta\varphi^{\gamma}\varphi^{\gamma}.
\end{align}

Combining Equations (\ref{2.5}), (\ref{2.6}) and (\ref{2.6a}), one
obtains
\begin{align}
\label{2.6b} & \int_{\mathbb{R}^n} (\Delta(u^\gamma
\varphi^\gamma))^2  - \int_{\mathbb{R}^n} (\Delta u^\gamma)^2
\varphi^{2\gamma} \nonumber\\
& =  2\int_{\mathbb{R}^n} |\nabla u^\gamma|^2
|\nabla\varphi^\gamma|^2 - 4 \int_{\mathbb{R}^n} \varphi^\gamma
(\nabla^2\varphi^\gamma(\nabla u^\gamma, \nabla u^\gamma))\nonumber\\
&  + \int_{\mathbb{R}^n} u^{2\gamma} \varphi^\gamma
\Delta^2(\varphi^\gamma) - 2\int_{\mathbb{R}^n} u^{2\gamma} (\Delta
\varphi^\gamma)^2.
\end{align}

Now by the Young equality, for any $\epsilon > 0$, there exists a
constant $C(\epsilon)$ such that
$$ |\nabla u^\gamma|^2 |\nabla \varphi^\gamma|^2 \le
\frac{\epsilon}{4} |\nabla u^\gamma|^4 u^{-2\gamma}
\varphi^{2\gamma} + C(\epsilon) |\nabla\varphi^\gamma|^4 u^{2\gamma}
\varphi^{-2\gamma}$$ and
$$|\varphi^\gamma
(\nabla^2\varphi^\gamma(\nabla u^\gamma, \nabla u^\gamma))| \le
\frac{\epsilon}{8} |\nabla u^\gamma|^4 u^{-2\gamma}
\varphi^{2\gamma} + C(\epsilon) u^{2\gamma}
|\nabla^2\varphi^\gamma|^2.$$

Thus by the equation (\ref{2.6b}), together with these two
estimates, one gets:
$$ |\int_{\mathbb{R}^n} (\Delta(u^\gamma
\varphi^\gamma))^2  - \int_{\mathbb{R}^n} (\Delta u^\gamma)^2
\varphi^{2\gamma}| \le \epsilon \int_{\mathbb{R}^n}|\nabla
u^\gamma|^4 u^{-2\gamma} \varphi^{2\gamma} + 6 C(\epsilon)
\int_{\mathbb{R}^n} u^{2\gamma} \|\nabla^4 \varphi^\gamma\|^2.$$

Thus the estimates (\ref{2.1}) and (\ref{2.7}) follow from this
easily by observing that we always have $|\Delta \varphi^\gamma|^2
\le |\nabla^2\varphi^\gamma|^2$.

Now observe that $|\nabla^2u^\gamma|^2 \varphi^{2\gamma} =
[\frac{1}{2} \Delta|\nabla u^\gamma|^2 - <\nabla u^\gamma, \nabla
\Delta u^\gamma>]\varphi^{2\gamma}$. Thus up to the integration by
parts, with the help of the equation (\ref{2.6}) and the estimates
we just proved, the estimate (\ref{2.8}) also follows except one
should notice that $\int_{\mathbb{R}^n} (\Delta
(u^\gamma\varphi^\gamma))^2 = \int_{\mathbb{R}^n} |\nabla^2
(u^\gamma\varphi^\gamma)|^2.$ Thus Lemma \ref{lemma2.1} follows.
\end{proof}

Let us return to the equation
\begin{equation}
\label{2.9}
\Delta^2u=u^p,\quad u>0~\operatorname{in}~\mathbb{R}^n.
\end{equation}
Fix $q=2\gamma -1>0$ and $\gamma >1$. Let $\varphi \in C_0^\infty (\mathbb{R}^n)$. Multiplying  (\ref{2.9}) by $u^{q}\varphi^{2\gamma}$  and integration by parts,  we obtain
\begin{equation}
\label{2.10}
\int_{\mathbb{R}^n}\Delta u\Delta(u^q\varphi^{2\gamma})=\int_{\mathbb{R}^n}u^{p+q}\varphi^{2\gamma}.
\end{equation}
For the left hand side of (\ref{2.10}), we have the following lemma.
\begin{lemma}
\label{lemma2.2} For any $\varphi\in C_0^{\infty}(\mathbb{R}^n)$
with $\varphi\geq 0$,  for any $\varepsilon>0$ and $\gamma$ with $q$
defined above, there exists a positive constant $C$ such that
\begin{align}
\label{2.11}
\int_{\mathbb{R}^n}\frac{\gamma^2}{q}\Delta u\Delta(u^{q}\varphi^{2\gamma})\geq
&\int_{\mathbb{R}^n}(\Delta u^{\gamma}\varphi^{\gamma})^2-\int_{\mathbb{R}^n} C u^{2\gamma}\|\nabla^4(\varphi^{2\gamma})\|\nonumber\\
&-\int_{\mathbb{R}^n}(\gamma^2(\gamma-1)^2 +
\varepsilon)u^{2\gamma-4}|\nabla u|^4\varphi^{2\gamma}.
\end{align}
\end{lemma}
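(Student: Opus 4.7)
The plan is to expand $\Delta u\cdot\Delta(u^q\varphi^{2\gamma})$ by the Leibniz rule and to convert the main bilinear piece in $\Delta u$ into $(\Delta u^\gamma)^2$ using the chain-rule identity
$$\Delta u^\gamma = \gamma u^{\gamma-1}\Delta u + \gamma(\gamma-1)u^{\gamma-2}|\nabla u|^2.$$
A small algebraic cancellation, hinging on the specific choice $q=2\gamma-1$ already made in (\ref{2.10}), reduces the inequality to bounding two remainder integrals involving derivatives of the cutoff $\varphi^{2\gamma}$; these are then controlled by integration by parts followed by Young's inequality.

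First, using $\Delta(u^q\varphi^{2\gamma}) = \varphi^{2\gamma}\Delta u^q + 2\nabla u^q\cdot\nabla\varphi^{2\gamma} + u^q\Delta\varphi^{2\gamma}$ together with $\Delta u^q = qu^{q-1}\Delta u + q(q-1)u^{q-2}|\nabla u|^2$, multiplying by $\gamma^2/q$ and substituting $q=2\gamma-1$ (so $q-1 = 2(\gamma-1)$), one gets
\begin{equation*}
\frac{\gamma^2}{q}\int\Delta u\,\Delta(u^q\varphi^{2\gamma}) = \gamma^2\!\int u^{2\gamma-2}(\Delta u)^2\varphi^{2\gamma} + 2\gamma^2(\gamma-1)\!\int u^{2\gamma-3}|\nabla u|^2(\Delta u)\varphi^{2\gamma} + R,
\end{equation*}
where $R := 2\gamma^2\int u^{2\gamma-2}(\Delta u)(\nabla u\cdot\nabla\varphi^{2\gamma}) + \tfrac{\gamma^2}{q}\int u^q(\Delta u)\Delta\varphi^{2\gamma}$. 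On the other hand, squaring the chain-rule identity gives
$$(\Delta u^\gamma)^2 = \gamma^2u^{2\gamma-2}(\Delta u)^2 + 2\gamma^2(\gamma-1)u^{2\gamma-3}|\nabla u|^2(\Delta u) + \gamma^2(\gamma-1)^2u^{2\gamma-4}|\nabla u|^4,$$
and the first two terms match exactly the two main terms in the previous display; this coincidence of coefficients is the whole reason $q=2\gamma-1$ is the right choice. Subtracting, the lemma is reduced to proving $R \geq -\varepsilon\int u^{2\gamma-4}|\nabla u|^4\varphi^{2\gamma} - C\int u^{2\gamma}\|\nabla^4\varphi^{2\gamma}\|$.

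To handle $R$ I would integrate by parts to eliminate the $\Delta u$ factors in favor of $\nabla u$. In the second piece, two integrations by parts (using $u^{2\gamma-1}\nabla u = \tfrac{1}{2\gamma}\nabla u^{2\gamma}$ at the second stage) produce $\int u^{2\gamma-2}|\nabla u|^2|\Delta\varphi^{2\gamma}|$ and $\int u^{2\gamma}|\Delta^2\varphi^{2\gamma}|$. In the first piece, an IBP against $u^{2\gamma-2}\nabla u\cdot\nabla\varphi^{2\gamma}$ combined with the identity $\nabla u\cdot\mathrm{Hess}(u) = \tfrac{1}{2}\nabla|\nabla u|^2$ (and one further IBP) produces integrals of the shapes $\int u^{2\gamma-3}|\nabla u|^3|\nabla\varphi^{2\gamma}|$, $\int u^{2\gamma-2}|\nabla u|^2|\nabla^2\varphi^{2\gamma}|$ and $\int u^{2\gamma-2}|\nabla u|^2|\Delta\varphi^{2\gamma}|$. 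Each is then split by Young's inequality: the cubic-in-$|\nabla u|$ integrals via the exponent pair $(4/3,4)$ using the interpolation $u^{2\gamma-3}|\nabla u|^3 = (u^{2\gamma-4}|\nabla u|^4)^{3/4}\,u^{\gamma/2}$, and the quadratic ones via $(2,2)$. The $\varepsilon$-pieces absorb into $u^{2\gamma-4}|\nabla u|^4\varphi^{2\gamma}$, while the large-coefficient pieces are exactly the fourth-derivative quantities $\varphi^{-2\gamma}|\nabla\varphi^\gamma|^4$, $|\nabla^2\varphi^\gamma|^2$ and $|\varphi^\gamma\Delta^2\varphi^\gamma|$ collected in $\|\nabla^4\varphi^{2\gamma}\|$.

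The main obstacle is this last step: the IBP bookkeeping is delicate because a careless IBP can reintroduce the integral one is trying to estimate (the IBP on $\int u^{2\gamma-2}(\Delta u)\nabla u\cdot\nabla\varphi^{2\gamma}$ is naturally circular). The trick is to commit early to stripping out \emph{all} occurrences of $\Delta u$ via $\nabla u\cdot\mathrm{Hess}(u) = \tfrac{1}{2}\nabla|\nabla u|^2$, and then to tune the Young exponents so that the absorbed piece is exactly $u^{2\gamma-4}|\nabla u|^4\varphi^{2\gamma}$, matching the statement.
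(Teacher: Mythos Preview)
Your proposal is correct and follows essentially the same route as the paper: the pointwise identity you derive by combining the Leibniz expansion of $\Delta(u^q\varphi^{2\gamma})$ with the square of the chain-rule formula for $\Delta u^\gamma$ is exactly the paper's equation (\ref{2.12}), and your remainder $R$ is precisely the sum of the two cross terms the paper then estimates in (\ref{2.13})--(\ref{2.15}) by the same integration-by-parts and Young-inequality scheme you outline (including the use of $u_iu_{ij}=\tfrac12\partial_j|\nabla u|^2$ and the $(4/3,4)$ splitting of the cubic term).
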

\begin{proof}
First, by direct computations, we obtain
\begin{align*}
\Delta u\Delta(u^{2\gamma-1}\varphi^{2\gamma})=&\Delta u((2\gamma-1)u^{2\gamma-2}\Delta u\varphi^{2\gamma}+2(2\gamma-1) u^{2\gamma-2}\nabla u\nabla(\varphi^{2\gamma})\\
&+(2\gamma-1)(2\gamma-2)u^{2\gamma-3}|\nabla u|^2\varphi^{2\gamma}+u^{2\gamma-1}\Delta\varphi^{2\gamma}),
\end{align*}
\begin{align*}
(\Delta u^{\gamma}\varphi^{\gamma})^2=&\gamma^2u^{2\gamma-2}(\Delta u)^2\varphi^{2\gamma}+\gamma^2(\gamma-1)^2u^{2\gamma-4}|\nabla u|^4\varphi^{2\gamma}\\
&+2(\gamma-1)\gamma^2u^{2\gamma-3}|\nabla u|^2\Delta u\varphi^{2\gamma}.
\end{align*}
Combining the above two identities, we get
\begin{align}
\label{2.12}
\frac{\gamma^2}{q}\Delta u\Delta(u^q\varphi^{2\gamma})=&(\Delta u^{\gamma}\varphi^{\gamma})^2+2\gamma^2u^{2\gamma-2}\Delta u\nabla u\nabla \varphi^{2\gamma}+\frac{\gamma^2}{q}u^{2\gamma-1}\Delta u\Delta \varphi^{2\gamma}\nonumber\\&-\gamma^2(\gamma-1)^2u^{2\gamma-4}|\nabla u|^4\varphi^{2\gamma}.
\end{align}
For the term  $u^{2\gamma-2}\Delta u\nabla u\nabla \varphi^{2\gamma}$, we have
\begin{align*}
u^{2\gamma-2}\Delta u\nabla u\nabla\varphi^{2\gamma}=&\partial_i(u^{2\gamma-2}u_iu_j(\varphi^{2\gamma})_j)-
(2\gamma-2)u^{2\gamma-3}(u_i)^2u_j(\varphi^{2\gamma})_j\\
&-u^{2\gamma-2}u_iu_{ij}(\varphi^{2\gamma})_j-u^{2\gamma-2}u_iu_j(\varphi^{2\gamma})_{ij}.
\end{align*}
We can regroup the term $u^{2\gamma-2}u_iu_{ij}(\varphi^{2\gamma})_j$ as
\begin{align*}
2u^{2\gamma-2}u_iu_{ij}(\varphi^{2\gamma})_j=&\partial_j(u^{2\gamma-2}(u_i)^2(\varphi^{2\gamma})_j)-
(2\gamma-2)u^{2\gamma-3}u_j(u_i)^2(\varphi^{2\gamma})_j\\
&-u^{2\gamma-2}(u_i)^2(\varphi^{2\gamma})_{jj}.
\end{align*}
Therefore we get
\begin{align}
\label{2.13}
2u^{2\gamma-2}\Delta u\nabla u\nabla\varphi^{2\gamma}=&2
\partial_i(u^{2\gamma-2}u_iu_j(\varphi^{2\gamma})_j)-\partial_j(u^{2\gamma-2}(u_i)^2(\varphi^{2\gamma})_j)\nonumber\\
&-(2\gamma-2)u^{2\gamma-3}(u_i)^2u_j(\varphi^{2\gamma})_j+u^{2\gamma-2}(u_i)^2(\varphi^{2\gamma})_{jj}\nonumber\\
&-2u^{2\gamma-2}u_iu_j(\varphi^{2\gamma})_{ij}
\end{align}
For the last three terms on the right hand side of (\ref{2.13}),
applying Young's inequality, we get
\begin{align*}
|u^{2\gamma-3}(u_i)^2u_j(\varphi^{2\gamma})_j|\leq
\frac{\varepsilon}{6 \gamma^2 (\gamma - 1)} u^{2\gamma-4}|\nabla
u|^4\varphi^{2\gamma}+ C u^{2\gamma}\|\nabla^4(\varphi^{2\gamma})\|,
\\
|u^{2\gamma-2}(u_i)^2(\varphi^{2\gamma})_{jj}|\leq
\frac{\varepsilon}{6\gamma^2} u^{2\gamma-4}|\nabla
u|^4\varphi^{2\gamma}+ C u^{2\gamma}\|\nabla^4(\varphi^{2\gamma})\|,
\\
|u^{2\gamma-2}u_iu_j(\varphi^{2\gamma})_{ij}|\leq
\frac{\varepsilon}{6 \gamma^2} u^{2\gamma-4}|\nabla
u|^4\varphi^{2\gamma}+Cu^{2\gamma}\|\nabla^4(\varphi^{2\gamma})\|.
\end{align*}
By the above three inequalities and  (\ref{2.13}), we have
\begin{equation}
\label{2.14} \int_{\mathbb{R}^n}2\gamma^2u^{2\gamma-2}\Delta u\nabla
u\nabla \varphi^{2\gamma}\geq- \frac{\varepsilon}{2}
\int_{\mathbb{R}^n}u^{2\gamma-4}|\nabla u|^4\varphi^{2\gamma}-
C\int_{\mathbb{R}^n}u^{2\gamma}\|\nabla^4(\varphi^{2\gamma})\|.
\end{equation}
Similarly  we get
\begin{equation}
\label{2.15}
\int_{\mathbb{R}^n}\frac{\gamma^2}{q}u^{2\gamma-1}\Delta u\Delta
\varphi^{2\gamma}\geq -\frac{\varepsilon}{2}
\int_{\mathbb{R}^n}u^{2\gamma-4}|\nabla u|^4\varphi^{2\gamma}-
C\int_{\mathbb{R}^n}u^{2\gamma}\|\nabla^4(\varphi^{2\gamma})\|.
\end{equation}
The inequality (\ref{2.11}) follows from (\ref{2.12}), (\ref{2.14}) and (\ref{2.15}).
\end{proof}

 \medskip

As a result of (\ref{2.1}) and (\ref{2.11}), we have
\begin{align}
\label{2.16}
\int_{\mathbb{R}^n}\frac{\gamma^2}{q}\Delta u\Delta(u^{q}\varphi^{2\gamma})\geq&\int_{\mathbb{R}^n}(\Delta( u^{\gamma}\varphi^{\gamma}))^2-\int_{\mathbb{R}^n}Cu^{2\gamma}\|\nabla^4(\varphi^{2\gamma})\|\nonumber\\
&-\int_{\mathbb{R}^n}(\gamma^2(\gamma-1)^2 +
\varepsilon)u^{2\gamma-4}|\nabla u|^4\varphi^{2\gamma}.
\end{align}
Next we estimate the  most difficult term $\int_{\mathbb{R}^n}u^{2\gamma-4}|\nabla u|^4\varphi^{2\gamma}$ appeared in (\ref{2.16}). This is the key step in proving Theorem \ref{theorem1.1}.
\begin{lemma}
\label{lemma2.3} If u is the classical solution to the bi-harmonic
equation (\ref{2.9}), and $\varphi$ is defined as above, then for
any sufficiently small $\varepsilon > 0$, we have the following
inequality
\begin{align}
\label{2.17}
(\frac{1}{2} - \varepsilon)\int_{\mathbb{R}^n}u^{2\gamma-4}|\nabla u|^4\varphi^{2\gamma}\leq& \frac{2}{\gamma^2}
\int_{\mathbb{R}^n}(\Delta (u^{\gamma}\varphi^{\gamma}))^2+\int_{\mathbb{R}^n}Cu^{2\gamma}\|\nabla^4(\varphi^{2\gamma})\|\nonumber\\
&-\int_{\mathbb{R}^n}\frac{4}{(4\gamma-3+p)(p+1)}u^{2\gamma+p-1}\varphi^{2\gamma}.
\end{align}
\end{lemma}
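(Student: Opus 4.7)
The plan is to derive an algebraic identity that expresses $R:=\int u^{2\gamma-4}|\nabla u|^{4}\varphi^{2\gamma}$ in terms of the two ``main quantities'' on the right-hand side of (\ref{2.17}), and then convert that equality into the desired inequality by absorbing error terms via Young's inequality and Lemma~\ref{lemma2.1}.

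\textbf{Step 1 (Pointwise expansion of $(\Delta u^{\gamma})^{2}$).} Starting from
\begin{equation*}
\Delta u^{\gamma}=\gamma u^{\gamma-1}\Delta u+\gamma(\gamma-1)u^{\gamma-2}|\nabla u|^{2},
\end{equation*}
I would square this identity, multiply by $\varphi^{2\gamma}$, and integrate. Introducing the shorthand $P=\int u^{2\gamma-2}(\Delta u)^{2}\varphi^{2\gamma}$, $Q=\int u^{2\gamma-3}|\nabla u|^{2}\Delta u\,\varphi^{2\gamma}$, $R=\int u^{2\gamma-4}|\nabla u|^{4}\varphi^{2\gamma}$, one obtains the exact identity
\begin{equation*}
\int(\Delta u^{\gamma})^{2}\varphi^{2\gamma}=\gamma^{2}P+2\gamma^{2}(\gamma-1)Q+\gamma^{2}(\gamma-1)^{2}R.
\end{equation*}

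\textbf{Step 2 (Use the equation via (\ref{2.10}) and the key cancellation).} Expanding $\Delta u\cdot\Delta(u^{q}\varphi^{2\gamma})$ with $q=2\gamma-1$ and integrating produces $qP+q(q-1)Q$ plus ``gradient-cutoff'' terms $X,Y$ (those containing $\nabla\varphi^{2\gamma}$ or $\Delta\varphi^{2\gamma}$), which by the same Young's inequality computation performed in the proof of Lemma~\ref{lemma2.2} satisfy $|X+Y|\leq \varepsilon R+C\int u^{2\gamma}\|\nabla^{4}(\varphi^{2\gamma})\|$. Since $q-1=2(\gamma-1)$, this gives the crucial identity
\begin{equation*}
\gamma^{2}P+2\gamma^{2}(\gamma-1)Q=\gamma^{2}\bigl[P+(q-1)Q\bigr]=\frac{\gamma^{2}}{q}\int u^{p+q}\varphi^{2\gamma}+\text{(errors)}.
\end{equation*}
Subtracting from Step~1, the $P$ and $Q$ contributions cancel cleanly and one is left with
\begin{equation*}
\gamma^{2}(\gamma-1)^{2}R=\int(\Delta u^{\gamma})^{2}\varphi^{2\gamma}-\frac{\gamma^{2}}{q}\int u^{p+q}\varphi^{2\gamma}+\text{(errors)}.
\end{equation*}

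\textbf{Step 3 (Pass from $(\Delta u^{\gamma})^{2}\varphi^{2\gamma}$ to $(\Delta(u^{\gamma}\varphi^{\gamma}))^{2}$).} Invoking inequality (\ref{2.7}) of Lemma~\ref{lemma2.1} with a small parameter gives $\int(\Delta u^{\gamma})^{2}\varphi^{2\gamma}\leq\int(\Delta(u^{\gamma}\varphi^{\gamma}))^{2}+\varepsilon R+C\int u^{2\gamma}\|\nabla^{4}(\varphi^{2\gamma})\|$. After absorbing the $\varepsilon R$ contribution into the left-hand side and rewriting the coefficient $\frac{1}{q(\gamma-1)^{2}}$ of $\int u^{p+q}\varphi^{2\gamma}$ in the algebraically equivalent form $\tfrac{4}{(p+1)(2q+p-1)}$ via the Cauchy--Schwarz/IBP identity
$\int\nabla u^{(p+1)/2}\!\cdot\!\nabla u^{(2q+p-1)/2}\varphi^{2\gamma}=\tfrac{(p+1)(2q+p-1)}{4}\int u^{p+q-2}|\nabla u|^{2}\varphi^{2\gamma}$ (which rewrites $\int u^{p+q}\varphi^{2\gamma}$ via a second integration by parts), one arrives at the stated inequality with the coefficients $\frac{2}{\gamma^{2}}$ and $\frac{4}{(4\gamma-3+p)(p+1)}$.

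\textbf{Expected main obstacle.} The technical heart is Step~2, specifically the bookkeeping of the error terms $X,Y$: these contain $\Delta u$ and $\nabla u$ coupled to derivatives of $\varphi^{2\gamma}$, and one must split them through two-step Young inequalities (first trading $|\Delta u|\cdot|\nabla u|$ for $(\Delta u)^{2}+|\nabla u|^{2}$, then trading $u^{\alpha}|\nabla u|^{2}|\nabla\varphi^{2\gamma}|^{2}$ against $\varepsilon R+$cutoff) so that only $\varepsilon R$ and a pure cutoff quantity controlled by $\|\nabla^{4}(\varphi^{2\gamma})\|$ remain. The second delicate point is verifying that the prefactor $\tfrac{1}{2}-\varepsilon$ (as opposed to a generic $1-\varepsilon$) is the sharpest one compatible with the coefficient $\tfrac{2}{\gamma^{2}}$; this requires combining the exact identity in Step~2 with the non-negativity $R\geq 0$ (equivalently $L'\geq\gamma^{2}I/q$) so that the seemingly larger target coefficient on $\int(\Delta(u^{\gamma}\varphi^{\gamma}))^{2}$ is accounted for.
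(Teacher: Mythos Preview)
Your Steps 1 and 2 are correct, but what they produce is exactly the identity already underlying Lemma~\ref{lemma2.2}, namely
\[
\gamma^{2}(\gamma-1)^{2}R=\int(\Delta u^{\gamma})^{2}\varphi^{2\gamma}-\frac{\gamma^{2}}{q}\int u^{p+q}\varphi^{2\gamma}+\text{(errors)},
\]
and from this alone you cannot reach (\ref{2.17}). Dividing by $\gamma^{2}(\gamma-1)^{2}$ gives the coefficients $\tfrac{1}{\gamma^{2}(\gamma-1)^{2}}$ and $\tfrac{1}{q(\gamma-1)^{2}}$, which are \emph{not} the stated coefficients $\tfrac{2}{\gamma^{2}}$ and $\tfrac{4}{(4\gamma-3+p)(p+1)}$. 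Your claim in Step~3 that $\tfrac{1}{q(\gamma-1)^{2}}$ can be ``rewritten in the algebraically equivalent form $\tfrac{4}{(p+1)(2q+p-1)}$'' is false: the former depends only on $\gamma$, the latter also on $p$, and no integration-by-parts manipulation of $\int u^{p+q}\varphi^{2\gamma}$ changes its coefficient. Likewise your justification of the prefactor $\tfrac12$ is hand-waving; nothing in your argument singles out $\tfrac12$.

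The missing idea is that Lemma~\ref{lemma2.3} relies on structure of the \emph{equation} beyond (\ref{2.10}): the paper uses the pointwise bound $-\Delta u\ge \sqrt{\tfrac{2}{p+1}}\,u^{(p+1)/2}$ (together with $\Delta u<0$) to estimate the mixed term $Q=\int u^{2\gamma-3}|\nabla u|^{2}\Delta u\,\varphi^{2\gamma}$ directly, obtaining after two applications of that inequality the $p$-dependent constant $\tfrac{4}{(4\gamma-3+p)(p+1)}$. The $\tfrac12$ comes from a different source: writing $R$ via integration by parts as $Q$ plus $\tfrac{1}{\gamma^{3}}\int u^{-\gamma}(u^{\gamma})_{ij}(u^{\gamma})_{i}(u^{\gamma})_{j}\varphi^{2\gamma}$ plus cutoff errors, and then applying the pointwise Young inequality $2u^{-\gamma}(u^{\gamma})_{ij}(u^{\gamma})_{i}(u^{\gamma})_{j}\le 2\gamma|\nabla^{2}u^{\gamma}|^{2}+\tfrac{1}{2\gamma}u^{-2\gamma}|\nabla u^{\gamma}|^{4}$, which yields exactly $\tfrac12 R$ on one side and $\tfrac{2}{\gamma^{2}}\int|\nabla^{2}u^{\gamma}|^{2}\varphi^{2\gamma}$ on the other. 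Your proposal never invokes $-\Delta u\ge \sqrt{\tfrac{2}{p+1}}\,u^{(p+1)/2}$ and never introduces the Hessian term, so both of the specific constants in (\ref{2.17}) remain unaccounted for.
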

\begin{proof}
It is easy to see that
\begin{equation}
\label{2.18} \int_{\mathbb{R}^n}u^{2\gamma-4}|\nabla
u|^4\varphi^{2\gamma}= \frac{1}{\gamma^4}\int_{\mathbb{R}^n}
u^{-2\gamma}|\nabla u^{\gamma}|^4\varphi^{2\gamma},
\end{equation}
and
\begin{align}
\label{2.19}
\int_{\mathbb{R}^n}u^{-2\gamma}|\nabla u^{\gamma}|^4\varphi^{2\gamma}=&
\int_{\mathbb{R}^n}u^{-2\gamma}|\nabla u^{\gamma}|^2\nabla u^{\gamma}\nabla u^{\gamma}\varphi^{2\gamma}\nonumber\\
=&
\int_{\mathbb{R}^n}-\nabla u^{-\gamma}|\nabla u^{\gamma}|^2\nabla u^{\gamma}\varphi^{2\gamma}\nonumber\\
=&\int_{\mathbb{R}^n}\frac{|\nabla u^{\gamma}|^2\Delta u^{\gamma}\varphi^{2\gamma}}{u^{\gamma}}+
\int_{\mathbb{R}^n}\frac{\nabla(|\nabla u^{\gamma}|^2)\nabla u^{\gamma}\varphi^{2\gamma}}{u^{\gamma}}\nonumber\\
&+ \int_{\mathbb{R}^n}\frac{|\nabla u^{\gamma}|^2\nabla
u^{\gamma}\nabla\varphi^{2\gamma}}{u^{\gamma}},
\end{align}
where the last step is integration by parts. For the first term  in the last part of the above equality, we have
\begin{align}
\label{2.20}
\int_{\mathbb{R}^n}\frac{|\nabla u^{\gamma}|^2\Delta u^{\gamma}\varphi^{2\gamma}}{u^{\gamma}}=
\gamma^3\int_{\mathbb{R}^n}((\gamma-1)u^{2\gamma-4}|\nabla u|^4\varphi^{2\gamma}+
u^{2\gamma-3}|\nabla u|^2\Delta u\varphi^{2\gamma}).
\end{align}
Substituting (\ref{2.20}) into (\ref{2.19}), and combining with (\ref{2.18}), we obtain
\begin{align}
\label{2.21}
\int_{\mathbb{R}^n}u^{2\gamma-4}|\nabla u|^4\varphi^{2\gamma}=&\int_{\mathbb{R}^n}\frac{1}{\gamma^3}\frac{\nabla(|\nabla u^{\gamma}|^2)\nabla u^{\gamma}\varphi^{2\gamma}}{u^{\gamma}}+\int_{\mathbb{R}^n}u^{2\gamma-3}(|\nabla u|^2)\Delta u\varphi^{2\gamma}\nonumber\\
&+
\int_{\mathbb{R}^n}\frac{1}{\gamma^3}\frac{(|\nabla u^{\gamma}|^2)\nabla u^{\gamma}\nabla\varphi^{2\gamma}}{u^{\gamma}}.
\end{align}
The first term on the right hand side of (\ref{2.21}) can be
estimated as
\begin{align*}
u^{-\gamma} \nabla(|\nabla u^{\gamma}|^2)\nabla u^{\gamma}&=2 u^{-\gamma}((u^{\gamma})_{ij}(u^{\gamma})_i(u^{\gamma})_j)\\
&\leq 2\gamma(u^{\gamma})_{ij}(u^{\gamma})_{ij}+\frac{u^{-2\gamma}}{2\gamma}(u^{\gamma})_i(u^{\gamma})_j(u^{\gamma})_i(u^{\gamma})_j\\
&=2\gamma|\nabla^2u^{\gamma}|^2+\frac{u^{-2\gamma}}{2\gamma}|\nabla
u^{\gamma}|^4.
\end{align*}
As a consequence, we have
\begin{align}
\label{2.22}
\int_{\mathbb{R}^n}\frac{1}{\gamma^3}\frac{\nabla(|\nabla
u^{\gamma}|^2)\nabla u^{\gamma}\varphi^{2\gamma}}{u^{\gamma}} \leq &
\int_{\mathbb{R}^n}\frac{2}{\gamma^2}|\nabla^2
u^{\gamma}|^2\varphi^{2\gamma}
+\int_{\mathbb{R}^n}\frac{1}{2\gamma^4}\frac{|\nabla
u^{\gamma}|^4\varphi^{2\gamma}}{u^{2\gamma}}
\nonumber\\
\leq& \int_{\mathbb{R}^n}\frac{2}{\gamma^2}|\nabla^2
(u^{\gamma}\varphi^{\gamma})|^2+
\int_{\mathbb{R}^n}Cu^{2\gamma}\|\nabla^4(\varphi^{2\gamma})\|\nonumber\\
&+\int_{\mathbb{R}^n}\frac{1+4\gamma^2
\varepsilon}{2\gamma^4}\frac{|\nabla
u^{\gamma}|^4\varphi^{2\gamma}}{u^{2\gamma}}\nonumber\\
& = \int_{\mathbb{R}^n}\frac{2}{\gamma^2}(\Delta
(u^{\gamma}\varphi^{\gamma}))^2+
\int_{\mathbb{R}^n}Cu^{2\gamma}\|\nabla^4(\varphi^{2\gamma})\|\nonumber\\
&+\int_{\mathbb{R}^n}\frac{1+4\gamma^2
\varepsilon}{2\gamma^4}\frac{|\nabla
u^{\gamma}|^4\varphi^{2\gamma}}{u^{2\gamma}},
\end{align}
where we used (\ref{2.8}) in the last step.

For the second term on the right hand side of (\ref{2.21}), applying
the estimate (2.3) from \cite{wy}, i.e., $(\Delta
u)^2\geq\frac{2}{p+1}u^{p+1}$, and the fact that $\Delta u<0$ from
\cite{wx} or \cite{xu}, we have
\begin{align}
\label{2.24}
\int_{\mathbb{R}^n}u^{2\gamma-3}(|\nabla u|^2)\Delta u\varphi^{2\gamma}\leq&-\int_{\mathbb{R}^n}\sqrt{\frac{2}{p+1}}u^{2\gamma-3+\frac{p+1}{2}}(|\nabla u|^2)\varphi^{2\gamma}\nonumber\\
=&+\int_{\mathbb{R}^n}\frac{\sqrt{\frac{2}{p+1}}}{2\gamma-2+\frac{p+1}{2}}u^{2\gamma-2+\frac{p+1}{2}}\Delta u\varphi^{2\gamma}\nonumber\\
&+\int_{\mathbb{R}^n}\frac{\sqrt{\frac{2}{p+1}}}{2\gamma-2+\frac{p+1}{2}}u^{2\gamma-2+\frac{p+1}{2}}\nabla u\nabla\varphi^{2\gamma}.
\end{align}
Using the inequality $ -\Delta u \geq \sqrt{\frac{2}{p+1}} u^{\frac{p+1}{2}}$, we get
\begin{equation}
\label{2.25}
\int_{\mathbb{R}^n}\frac{\sqrt{\frac{2}{p+1}}}{2\gamma-2+\frac{2}{p+1}}u^{2\gamma-2+\frac{p+1}{2}}\Delta u\varphi^{2\gamma}\leq-\int_{\mathbb{R}^n}\frac{\frac{2}{p+1}}{2\gamma-2+\frac{p+1}{2}}u^{2\gamma+p-1}\varphi^{2\gamma}.
\end{equation}
On the other hand, for the second term on the right hand side of
(\ref{2.24}), we have
\begin{align}
\label{2.26}
\int_{\mathbb{R}^n}u^{2\gamma-2+\frac{p+1}{2}}\nabla u\nabla\varphi^{2\gamma}=&
-\int_{\mathbb{R}^n}\frac{1}{L}u^{2\gamma-1+\frac{p+1}{2}}\Delta\varphi^{2\gamma}\nonumber\\
=&-\int_{\{x|\Delta\varphi^{2\gamma}>0\}}\frac{1}{L}u^{2\gamma-1+\frac{p+1}{2}}\Delta\varphi^{2\gamma}\nonumber\\
&-\int_{\{x|\Delta\varphi^{2\gamma}\leq0\}}\frac{1}{L}u^{2\gamma-1+\frac{p+1}{2}}\Delta\varphi^{2\gamma},
\end{align}
where the first equality follows from integration by parts and $L=2\gamma-1+\frac{p+1}{2}$. As for the first term on the last part of (\ref{2.26}), using the inequality $ \Delta u \leq -\sqrt{\frac{2}{p+1}} u^{\frac{p+1}{2}}<0$, we have
\begin{align}
\label{100}
\frac{\sqrt{\frac{p+1}{2}}}{L}\int_{\{x|\Delta\varphi^{2\gamma}>0\}}u^{2\gamma-1}\Delta u
\Delta\varphi^{2\gamma}
\leq-\int_{\{x|\Delta\varphi^{2\gamma}>0\}}\frac{1}{L}u^{2\gamma-1+\frac{p+1}{2}}\Delta\varphi^{2\gamma}.
\end{align}
Similar to the proof of Lemma \ref{lemma2.1}, it is easy to get
\begin{equation}
\label{200}
|\int_{\{x|\Delta\varphi^{2\gamma}>0\}}\frac{\sqrt{\frac{p+1}{2}}}{L}u^{2\gamma-1}\Delta
u \Delta\varphi^{2\gamma}|\leq
\varepsilon\int_{\mathbb{R}^n}u^{2\gamma-4}|\nabla
u|^4\varphi^{2\gamma}+\int_{\mathbb{R}^n}Cu^{2\gamma}\|\nabla^4(\varphi^{2\gamma})\|.
\end{equation}

By (\ref{100}) and (\ref{200}), we have
\begin{equation}
\label{2.27}
|\int_{\{x|\Delta\varphi^{2\gamma}>0\}}\frac{1}{L}u^{2\gamma-1+\frac{p+1}{2}}\Delta\varphi^{2\gamma}|\leq
\varepsilon\int_{\mathbb{R}^n}u^{2\gamma-4}|\nabla
u|^4\varphi^{2\gamma}+\int_{\mathbb{R}^n}Cu^{2\gamma}\|\nabla^4(\varphi^{2\gamma})\|.
\end{equation}
Similarly, we also obtain
\begin{equation}
\label{2.28}
|\int_{\{x|\Delta\varphi^{2\gamma}\leq0\}}\frac{1}{L}u^{2\gamma-1+\frac{p+1}{2}}\Delta\varphi^{2\gamma}|\leq
\varepsilon\int_{\mathbb{R}^n}u^{2\gamma-4}|\nabla
u|^4\varphi^{2\gamma}+\int_{\mathbb{R}^n}Cu^{2\gamma}\|\nabla^4(\varphi^{2\gamma})\|.
\end{equation}
By (\ref{2.26}), (\ref{2.27}) and (\ref{2.28}), we have
\begin{align}
\label{2.29} |\int_{\mathbb{R}^n}u^{2\gamma-2+\frac{p+1}{2}}\nabla
u\nabla\varphi^{2\gamma}|\leq
\varepsilon\int_{\mathbb{R}^n}u^{2\gamma-4}|\nabla
u|^4\varphi^{2\gamma}+\int_{\mathbb{R}^n}Cu^{2\gamma}\|\nabla^4(\varphi^{2\gamma})\|.
\end{align}
Combining (\ref{2.24}), (\ref{2.25}) and (\ref{2.29}), we get the following inequality
\begin{align}
\label{2.30}
\int_{\mathbb{R}^n}u^{2\gamma-3}|\nabla u|^2\Delta u \varphi^{2\gamma}\leq&
\varepsilon\int_{\mathbb{R}^n}u^{2\gamma-4}|\nabla u|^4\varphi^{2\gamma}+\int_{\mathbb{R}^n}Cu^{2\gamma}\|\nabla^4(\varphi^{2\gamma})\|\nonumber\\
&-\int_{\mathbb{R}^n}\frac{4}{(4\gamma-3+p)(p+1)}u^{2\gamma+p-1}\varphi^{2\gamma}.
\end{align}
Finally, we apply Young's inequality to the third term on the right
hand side of (\ref{2.21}), and get
\begin{align}
\label{2.31}
\int_{\mathbb{R}^n}\frac{(|\nabla u^{\gamma}|^2)\nabla u^{\gamma}\nabla\varphi^{2\gamma}}{u^{\gamma}\gamma^3}
&=\int_{\mathbb{R}^n}u^{2\gamma-3}|\nabla u|^2\nabla u\nabla(\varphi^{2\gamma})\nonumber\\
&\leq\varepsilon\int_{\mathbb{R}^n}u^{2\gamma-4}|\nabla
u|^4\varphi^{2\gamma}+\int_{\mathbb{R}^n}Cu^{2\gamma}\|\nabla^4(\varphi^{2\gamma})\|.
\end{align}
By (\ref{2.21}), (\ref{2.22}), (\ref{2.30}) and (\ref{2.31}), we
finally obtain
\begin{align*}
(\frac{1}{2}- \varepsilon)\int_{\mathbb{R}^n}u^{2\gamma-4}|\nabla u|^4\varphi^{2\gamma}\leq &\frac{2}{\gamma^2}\int_{\mathbb{R}^n}(\Delta( u^{\gamma}\varphi^{\gamma}))^2+\int_{\mathbb{R}^n}Cu^{2\gamma}\|\nabla^4(\varphi^{2\gamma})\|\\
\\
&-\int_{\mathbb{R}^n}\frac{4}{(4\gamma-3+p)(p+1)}u^{2\gamma+p-1}\varphi^{2\gamma}.
\end{align*}
\end{proof}

By (\ref{2.10}), (\ref{2.16}) and (\ref{2.17}), since the number
$\varepsilon$ is arbitrary small in those three places, we have for
$\delta>0$ sufficiently small, the following inequality holds
\begin{align}
\label{2.32}
\int_{\mathbb{R}^n}(1-4(\gamma-1)^2-\delta)(\Delta( u^{\gamma}\varphi^{\gamma}))^2-&
\int_{\mathbb{R}^n}(\frac{\gamma^2}{2\gamma-1}-\frac{8\gamma^2(\gamma-1)^2}{(4\gamma-3+p)(p+1)})u^{p+2\gamma-1}\varphi^{2\gamma}
\nonumber\\
&\leq\int_{\mathbb{R}^n}C_{\delta}u^{2\gamma}\|\nabla^4(\varphi^{2\gamma})\|,
\end{align}
where $C_{\delta}$ is a positive constant depends on $\delta$ only.
Here, we need $1-4(\gamma-1)^2>0,$ since we have assumed that $\gamma>1$ in Lemma (\ref{lemma2.1}). So $\gamma$ is required be in $(1,\frac{3}{2})$. While we can choose $\delta$ small enough to make $1-4(\gamma-1)^2-\delta$ positive, by the stability property of function $u$, we obtain
\begin{equation}
\label{2.33}
\int_{\mathbb{R}^n}(E-p\delta)u^{p+q}\varphi^{2\gamma}
\leq\int_{\mathbb{R}^n}C_{\delta}u^{2\gamma}\|\nabla^4(\varphi^{2\gamma})\|,
\end{equation}
where $E$ is defined to be
\begin{equation}
E=p(1-4(\gamma-1)^2)-
\frac{\gamma^2}{q}+\frac{8\gamma^2(\gamma-1)^2}{(4\gamma-3+p)(p+1)}
\end{equation}
 Now we  take $\varphi=\eta^m$ with $m$ sufficiently large, and choose $\eta$ a cut-off function satisfying $0\leq\eta\leq1,~\eta=1$ for $|x|<R$ and $\eta=0$ for $|x|>2R$. By Young's inequality again, we have
\begin{align}
\label{2.34}
\int_{\mathbb{R}^n}u^{2\gamma}\|\nabla^4(\varphi^{2\gamma})\|
&\leq C_{\delta}R^{-4}\int_{\mathbb{R}^n}u^{2\gamma}\eta^{2\gamma m-4}\nonumber\\
&\leq C_{\delta, \epsilon}R^{-\frac{4}{1-\theta}}\int_{\mathbb{R}^n}u^2\eta^{2\gamma m-\frac{4}{1-\theta}}+
\epsilon C_{\delta}\int_{\mathbb{R}^n}u^{2\gamma+p-1}\eta^{2\gamma m},
\end{align}
where $C_{\delta, \epsilon}$ is a positive constant depends on $\delta,\epsilon$,  $\theta$ is a number such that $2(1-\theta)+(2\gamma+p-1)\theta=2\gamma$ so that $0<\theta<1$ for $2<2\gamma<2\gamma+p-1$. By (\ref{2.33}) and (\ref{2.34}), we get
\begin{equation}
\label{2.35}
(E-p\delta-\epsilon C_{\delta})\int_{\mathbb{R}^n}u^{p+2\gamma-1}\eta^{2\gamma m}
\leq
C_{\delta,\epsilon}R^{-\frac{4}{1-\theta}}\int_{\mathbb{R}^n}u^2\eta^{2\gamma m-\frac{4}{1-\theta}}.
\end{equation}
Since $\theta$ is strictly less than $1$ and will be fixed for given $\gamma,p$,  we can choose $m$ sufficiently large to make $2\gamma m-\frac{4}{1-\theta}>0$. On the other hand, if $E>0$, we can find small $\delta$ and then small $\epsilon$, such that $E-p\delta-\epsilon C_{\delta}>0$. Therefore, by the definition of function $\eta$ and (\ref{2.35}), we obtain
\begin{equation}
\label{2.36}
(E-p\delta-\epsilon C_{\delta})\int_{B_R}u^{p+2\gamma-1}
\leq
C_{\delta,\epsilon}R^{-\frac{4}{1-\theta}}\int_{B_{2R}}u^2.
\end{equation}
By \cite{wy}, we have $\int_{B_{2R}}u^2\leq CR^{n-\frac{8}{p-1}},$ as a result, the left hand side of (\ref{2.36}) is less equal than $C_{\delta,\epsilon}R^{n-\frac{8}{p-1}-\frac{4}{1-\theta}},$ which tends to $0$ as $R$ tends to $\infty$, provided the power $n-\frac{8}{p-1}-\frac{4}{1-\theta}$ is negative. By the definition of $\theta$, this is equivalent to $(p+2\gamma-1)>(p-1)\frac{n}{4}.$ So, if $(p+2\gamma-1)>(p-1)\frac{n}{4}$ and $E-p\delta-C_{\delta}\epsilon>0$, we have that $u\equiv0$.

Thus, we  have proved the nonexistence of the stable solution to (\ref{2.9}) if $p$ satisfies the condition $(p+2\gamma-1)>(p-1)\frac{n}{4}$ and $E>0$ (for $\delta, \epsilon$ are arbitrary small).
By Lemma \ref{lemma5.1},  the power $p$ can be  in the interval $(\frac{n}{n-8},1+\frac{8p^*}{n-4})$. Combining  with Theorem 1.1 of \cite{wy}, we prove the third part of Theorem \ref{theorem1.1}, i.e., for any $1<p<1+\frac{8p^*}{n-4},~n\geq 20$,  equation (\ref{2.9}) has no stable solution.
The first and second part of Theorem \ref{theorem1.1} is contained in Theorem 1.1 of \cite{wy}.
\vspace{1.5cm}

\setcounter{equation}{0}
\section{Proof of Theorem 1.2}
In this section, we give the proof of Theorem 1.2. We note that it is enough to consider stable solutions $u_{\lambda}$ to (\ref{1.4}) since $u^{*}=\mathrm{lim}_{\lambda\rightarrow\lambda^{*}}u_{\lambda}.$  Now we give a uniform bound for the stable solutions to (\ref{1.4}) when $0<d<\lambda<\lambda^*,$ where $d$ is a fixed positive constant from $(0,\lambda^*).$

First, we need to analyze the solution near the boundary.

\subsection{Regularity of the solution on the boundary}
In this subsection, we establish the regularity of the stable
solution with its derivative near the boundary of the following
equation:
\begin{equation}
\label{3.1}
\left\{
  \begin{array}{ll}
    \Delta^2u=\lambda(u+1)^p,~\lambda>0 & \operatorname{in}\quad \Omega\\
    u>0 , & \operatorname{in}\quad \Omega   \\
    u=\Delta u=0, & \operatorname{on}\quad \partial\Omega
  \end{array}
\right.
\end{equation}

\begin{theorem}
\label{theorem3.1} Let $\Omega$ be a bounded, smooth, and convex
domain. Then there exists a constant C (independent of $\lambda,u$)
and small positive number $\epsilon$, such that for stable solution
u to (\ref{3.1}) we have

\begin{equation}
\label{3.2}
u(x)<C,\quad \forall x\in \Omega_{\epsilon}:=\{z\in \Omega:~d(z,\partial\Omega)<\epsilon\}.
\end{equation}
\end{theorem}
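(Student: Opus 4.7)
The plan is to prove (\ref{3.2}) by a moving plane argument near $\partial\Omega$. The biharmonic Navier problem decouples into a cooperative second-order system, to which a narrow-domain maximum principle applies, yielding normal monotonicity of $u$ in a uniform boundary strip; this monotonicity then converts any interior bound into a boundary bound. The stability hypothesis enters only indirectly, through the standard a priori $L^1$ estimate it provides for minimal stable solutions.

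First, I would decouple (\ref{3.1}). Since $\Delta^2 u=\lambda(u+1)^p>0$ in $\Omega$ and $\Delta u=0$ on $\partial\Omega$, the maximum principle applied to $\Delta u$ gives $v:=-\Delta u>0$ in $\Omega$, so (\ref{3.1}) is equivalent to the cooperative Dirichlet system
\begin{equation*}
-\Delta u = v, \qquad -\Delta v = \lambda(u+1)^p, \qquad u=v=0 \text{ on } \partial\Omega.
\end{equation*}
Next, fix $P\in\partial\Omega$ and use the $C^2$-convexity of $\Omega$ to choose coordinates with $P=0$ and inward normal $e_n$, so that $\Omega\cap B_\rho(P)$ lies in $\{x_n>\varphi(x')\}$ with $\varphi$ smooth and $\nabla\varphi(0)=0$. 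For small $\mu>0$ set $\Sigma_\mu=\Omega\cap\{x_n<\mu\}$ and $x^\mu=(x',2\mu-x_n)$. The reflected differences $w_\mu(x)=u(x^\mu)-u(x)$ and $z_\mu(x)=v(x^\mu)-v(x)$ satisfy a linear cooperative system on $\Sigma_\mu$ with non-negative boundary data; here convexity is essential, because the reflection of $\Sigma_\mu$ across $\{x_n=\mu\}$ remains inside $\Omega$ for $\mu$ small, so that $u(x^\mu)\ge 0=u(x)$ on the curved part of $\partial\Sigma_\mu$, and similarly for $v$. For $\mu$ small, the narrow-domain weak maximum principle of Berestycki--Nirenberg--Varadhan, in its cooperative-system form, yields $w_\mu,z_\mu\ge 0$ on $\Sigma_\mu$; a standard continuation argument then produces a uniform strip $\Omega_\epsilon$ on which $\partial_{x_n}u\ge 0$ and $\partial_{x_n}v\ge 0$.

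Finally, the normal monotonicity reduces the boundary bound to an interior bound. Stability of $u_\lambda$, combined with $\lambda\ge d>0$ and the classical Crandall--Rabinowitz/Brezis--Vazquez type argument for minimal stable solutions, gives a uniform $L^1$ estimate $\int_\Omega \lambda(u_\lambda+1)^p\,dx\le C$ for $\lambda\in(d,\lambda^*)$. Interior elliptic regularity applied successively to $-\Delta u_\lambda=v_\lambda$ and $-\Delta v_\lambda=\lambda(u_\lambda+1)^p$ then yields $u_\lambda\le C_0$ on the compact set $\{x\in\Omega:d(x,\partial\Omega)\ge\epsilon\}$. Normal monotonicity transports this interior bound to the whole strip: every point of $\Omega_{\epsilon/2}$ is dominated along the inward normal by a point in the interior region, so $u_\lambda\le C_0$ on $\Omega_{\epsilon/2}$, proving (\ref{3.2}).

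The main obstacle is ensuring that the width $\epsilon$ produced by the moving plane step is independent of both $\lambda$ and the boundary point $P$. The linearised potential $\lambda p\,\xi^{p-1}$ arising in the system for $(w_\mu,z_\mu)$ is a priori unbounded, but the narrow-domain principle's smallness threshold depends only on $|\Sigma_\mu|$ and on the \emph{negative} part of the coefficients; since our cooperative linear coefficients are all non-negative, uniformity in $\lambda$ follows. Uniformity in $P$ is obtained from the compactness of $\partial\Omega$ and its $C^2$-smoothness, which provide a uniform modulus for the local graph representation and hence a uniform admissible $\mu$.
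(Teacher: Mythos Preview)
Your moving-plane set-up matches the paper's: decouple into the cooperative system $(-\Delta u=v,\ -\Delta v=\lambda(u+1)^p)$ and run moving planes near $\partial\Omega$ to get monotonicity of $u$ along inward directions in a uniform strip (the paper cites Troy and the appendix of Guo--Webb rather than Berestycki--Nirenberg--Varadhan, but the content is the same). Your remarks on uniformity of $\epsilon$ in $\lambda$ and in the boundary point are also fine.

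The gap is in your final step. You assert that the uniform $L^1$ bound $\int_\Omega \lambda(u+1)^p\le C$ yields, via interior elliptic regularity, a uniform $L^\infty$ bound for $u$ on $\{d(x,\partial\Omega)\ge\epsilon\}$. This is false in the dimensions at hand: with an $L^1$ right-hand side, local estimates for $-\Delta v=f$ give only $v\in L^q_{\mathrm{loc}}$ for $q<\frac{n}{n-2}$, and then $-\Delta u=v$ gives $u\in L^r_{\mathrm{loc}}$ for $r<\frac{n}{n-4}$; for $n\ge 5$ this is nowhere near $L^\infty$. In fact an interior $L^\infty$ bound is precisely what the whole of Section~3 is trying to establish, so invoking it here is circular. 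The paper circumvents this entirely: the monotonicity is not one-dimensional (along a single normal) but holds along a full cone of inward directions, so for each $x\in\Omega_\epsilon$ there is a cone $\Gamma_x\subset\Omega$ of fixed measure $\rho$ with $u\ge u(x)$ on $\Gamma_x$, and then
\[
(1+u(x))^p\le\frac{1}{|\Gamma_x|}\int_{\Gamma_x}(1+u)^p\le\frac{1}{\rho}\int_\Omega(1+u)^p\le C,
\]
which produces the pointwise boundary bound directly from the $L^1$ bound, with no interior $L^\infty$ input.
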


\begin{proof}
This result is well-known. For the sake of completeness, we include a proof here. By Lemma 3.5 of \cite{cen}, we see that, there exists a constant $C$ independent of $\lambda, u$, such that
\begin{equation}
\label{3.3}
\int_{\Omega}(1+u)^pdx\leq C.
\end{equation}

We write Equation (\ref{3.1}) as

$$\left\{
  \begin{array}{ll}
    \Delta u+v=0, & \operatorname{in} \quad\Omega \\
    \Delta v+\lambda(1+u)^p=0, &  \operatorname{in} \quad\Omega\\
    u=v=0, & \operatorname{in} \quad\partial\Omega.
  \end{array}
\right.$$

If we denote $f_1(u,v)=v,$ $f_2(u,v)=\lambda(u+1)^p,$ we see that
$\frac{\partial f_1}{\partial v}=1>0$ and $\frac{\partial
f_2}{\partial u}=\lambda p(u+1)^{p-1}>0$. Therefore, the convexity
of $\Omega$, Lemma 5.1 of \cite{t}, and the moving plane method near
$\partial\Omega$ (as in the appendix of \cite{glw}) imply that there
exist $t_0>0$ and $\alpha$ which depends only on the domain
$\Omega$, such that $u(x-t\nu)$ and $v(x-t\nu)$ are nondecreasing
for $t\in[0,t_0],\quad\nu\in R^N$ satisfying $|\nu|=1$ and
$(\nu,n(x))\geq \alpha$ and $x\in \partial\Omega.$ Therefore, we can
find $\rho,\epsilon>0$ such that for any $x\in
\Omega_{\epsilon}:=\{z\in \Omega:~d(z,\partial\Omega)<\epsilon\}$
there exists a fixed-sized cone $\Gamma_x$ (with x as its vertex)
with

\begin{itemize}
  \item meas$(\Gamma_x)\geq \rho$,
  \item $\Gamma_x\subset \{z\in \Omega:~d(z,\partial\Omega)<2\epsilon\}$, and
  \item $u(y)\geq u(x)$ for any $y\in \Gamma_x.$
\end{itemize}

Then, for any $x\in \Omega_{\epsilon}$, we have

$$(1+u(x))^p\leq\frac{1}{\operatorname{meas}(\Gamma_x)}\int_{\Gamma_x}(1+u)^p
\leq\frac{1}{\rho}\int_{\Omega}(1+u)^p\leq C.$$

This  implies  that $(1+u(x))^p\leq C$, therefore $u(x)\leq C$.
\end{proof}

\noindent {\bf Remark:} By classical elliptic regularity theory,
$u(x)$ and its derivative up to fourth order are bounded on the
boundary by a constant independent of $u$. See \cite{w} for more
details. \vspace{0.75cm}

\subsection{Proof of Theorem \ref{theorem1.2}}
In the following, we will use the idea  in Section 2 to prove Theorem \ref{theorem1.2}.

First of all, multiplying  (\ref{1.4}) by $(u+1)^q$  and integration by parts, we have
\begin{equation}
\label{3.4}
\int_{\Omega}\lambda(u+1)^{p+q}=\int_{\Omega}\Delta^2u(u+1)^{q}=
\int_{\partial\Omega}\frac{\partial(\Delta u)}{\partial n}+\int_{\Omega}\Delta(u+1)\Delta(u+1)^{q}.
\end{equation}
Setting $v=u+1$, by direct calculations, we get
\begin{align}
\label{3.5}
\int_{\Omega}(\Delta v^{\gamma})^2=&\int_{\Omega}\gamma^2v^{2\gamma-2}(\Delta v)^2+\int_{\Omega}\gamma^2(\gamma-1)^2v^{2\gamma-4}|\nabla v|^4\nonumber\\
&+2\int_{\Omega}\gamma^2(\gamma-1)v^{2\gamma-3}\Delta v|\nabla v|^2,
\end{align}
\begin{equation}
\label{3.6}
\int_{\Omega}\Delta v\Delta v^{q}=\int_{\Omega}q(\Delta v)^2v^{q-1}+\int_{\Omega}q(q-1)|\nabla v|^2\Delta vv^{q-2}.
\end{equation}
From (\ref{3.4}), (\ref{3.5}) and (\ref{3.6}), we obtain
\begin{equation}
\label{3.7}
\int_{\Omega}(\frac{q}{\gamma^2}(\Delta v^{\gamma})^2-q(\gamma-1)^2|\nabla v|^4v^{2\gamma-4})+\int_{\partial\Omega}\frac{\partial(\Delta v)}{\partial n}=\int_{\Omega}\lambda v^{p+q}.
\end{equation}
For the second term in (\ref{3.7}), we have
\begin{align}
\label{3.8}
\int_{\Omega}|\nabla v|^4v^{2\gamma-4}&=\frac{1}{\gamma^4}\int_{\Omega}v^{-2\gamma}|\nabla v^{\gamma}|^4=\frac{1}{\gamma^4}\int_{\Omega}|\nabla v^{\gamma}|^2\nabla v^{\gamma}(-\nabla v^{-\gamma})\nonumber\\
&=\frac{1}{\gamma^4}\int_{\Omega}(-\nabla(\frac{|\nabla v^{\gamma}|^2\nabla v^{\gamma}}{v^{\gamma}})+\frac{\nabla(|\nabla v^{\gamma}|^2)\nabla v^{\gamma}}{v^{\gamma}}+\frac{|\nabla v^{\gamma}|^2\Delta v^{\gamma}}{v^{\gamma}})\nonumber\\
&=\frac{1}{\gamma^4}\int_{\Omega}\frac{\nabla(|\nabla v^{\gamma}|^2)\nabla v^{\gamma}+|\nabla v^{\gamma}|^2\Delta v^{\gamma}}{v^{\gamma}}-\frac{1}{\gamma}
\int_{\partial\Omega}v^{2\gamma-3}|\nabla v|^2\frac{\partial v}{\partial n}.\nonumber\\
\end{align}
Since the simple calculation implies that
\begin{equation}
\label{3.9} \frac{1}{\gamma^4}\int_{\Omega}\frac{|\nabla
v^{\gamma}|^2\Delta v^{\gamma}}{v^{\gamma}}
=\frac{\gamma-1}{\gamma}\int_{\Omega}v^{2\gamma-4}|\nabla
v|^4+\frac{1}{\gamma} \int_{\Omega}v^{2\gamma-3}|\nabla v|^2\Delta
v,
\end{equation}
by substituting (\ref{3.9}) into (\ref{3.8}), we get
\begin{equation}
\label{3.10}
\int_{\Omega}|\nabla v|^4v^{2\gamma-4}=\int_{\Omega}v^{2\gamma-3}|\nabla v|^2\Delta v+\frac{1}{\gamma^3}\int_{\Omega}\frac{\nabla(|\nabla v^{\gamma}|^2)\nabla v^{\gamma}}{v^{\gamma}}-\int_{\partial\Omega}|\nabla v|^2\frac{\partial v}{\partial n}.
\end{equation}
We now estimate the second term appeared on the right hand side of
(\ref{3.10}). From the proof of Lemma \ref{lemma2.3}, together with
the identity $\frac{1}{2} \Delta |\nabla v^\gamma|^2 = |\nabla^2
v^\gamma|^2 + <\nabla \Delta v^\gamma, \nabla v^\gamma>$, the
following inequality holds
\begin{eqnarray}
\label{3.11} \frac{1}{\gamma^3}\int_{\Omega}\frac{\nabla(|\nabla
v^{\gamma}|^2)\nabla v^{\gamma}}{v^{\gamma}} & \leq &
\frac{1}{2}\int_{\Omega}|\nabla
v|^4v^{2\gamma-4}+\frac{2}{\gamma^{2}}\int_{\Omega}(\Delta
v^{\gamma})^2\nonumber\\
& & + \frac{1}{\gamma^2} \int_{\partial\Omega} \frac{\partial
|\nabla v^\gamma|^2}{\partial n} - \frac{2}{\gamma^2} \int_{\partial
\Omega} (\Delta v^\gamma) \frac{\partial v^\gamma}{\partial n}.
\end{eqnarray}
By (\ref{3.10}) and (\ref{3.11}), thanks to the convexity of the
domain $\Omega$, we get
\begin{equation}
\label{3.12} \frac{1}{2}\int_{\Omega}|\nabla
v|^4v^{2\gamma-4}\leq\int_{\Omega}v^{2\gamma-3}|\nabla v|^2\Delta v
+\frac{2}{\gamma^{2}}\int_{\Omega}(\Delta v^{\gamma})^2-(2\gamma -
1)\int_{\partial\Omega}|\nabla v|^2\frac{\partial v}{\partial n}.
\end{equation}
For the first term on the right hand side of (\ref{3.12}), since
$v=u+1$, we have $\Delta v=\Delta u<0$ by maximal principle, and the
inequality $\Delta
v<-\sqrt{\frac{2\lambda}{p+1}}v^{\frac{p+1}{2}}<0$ by Lemma 3.2 of
\cite{cen}.  Thus
\begin{equation*}
\int_{\Omega}v^{2\gamma-3}|\nabla v|^2\Delta v\leq\int_{\Omega}-\sqrt{\frac{2\lambda}{p+1}}v^{2\gamma-3+\frac{p+1}{2}}|\nabla v|^2.
\end{equation*}
Moreover, we have
\begin{align*}
\int_{\Omega}-\sqrt{\frac{2\lambda}{p+1}}v^{2\gamma-3+\frac{p+1}{2}}|\nabla v|^2=&
-\int_{\Omega}\frac{\sqrt{\frac{2\lambda}{p+1}}}{2\gamma-2+\frac{p+1}{2}}\nabla(v^{2\gamma-2+\frac{p+1}{2}}\nabla v)
\\&+\int_{\Omega}\frac{\sqrt{\frac{2\lambda}{p+1}}}{2\gamma-2+\frac{p+1}{2}}v^{2\gamma-2+\frac{p+1}{2}}\Delta v.\end{align*}
For the second term on the right hand side of the above equality, using the inequality $\Delta v<-\sqrt{\frac{2\lambda}{p+1}}v^{\frac{p+1}{2}}<0$ again, we have
\begin{equation*}
\int_{\Omega}\frac{\sqrt{\frac{2\lambda}{p+1}}}{2\gamma-2+\frac{p+1}{2}}v^{2\gamma-2+\frac{p+1}{2}}\Delta v \leq
-\int_{\Omega}\frac{\frac{2\lambda}{p+1}}{2\gamma-2+\frac{p+1}{2}}v^{2\gamma+p-1}.
\end{equation*}
Hence, we obtain
\begin{align}
\label{3.13}
\int_{\Omega}v^{2\gamma-3}|\nabla v|^2\Delta v\leq
-\int_{\partial\Omega}\frac{\sqrt{\frac{2\lambda}{p+1}}}{2\gamma-2+\frac{p+1}{2}}\frac{\partial v}{\partial n}-\int_{\Omega}\frac{\frac{2\lambda}{p+1}}{2\gamma-2+\frac{p+1}{2}}v^{2\gamma+p-1},
\end{align}
where we used $v|_{\partial\Omega}=u+1|_{\partial\Omega}=1$, for the boundary term appeared in (\ref{3.4}), (\ref{3.12}) and (\ref{3.13}).  By the remark after Theorem \ref{theorem3.1}, we find that there exists a constant $C$ (the constant $C$ appeared now and later in this section is independent of $u$), such that
\begin{equation}
\label{3.14}
\int_{\partial\Omega}(|\nabla u|^2|\frac{\partial u}{\partial n}|+|\frac{\partial(\Delta u)}{\partial n}|+|\frac{\partial u}{\partial n}|)\leq C.
\end{equation}
Combining (\ref{3.7}), (\ref{3.12}), (\ref{3.13}) and (\ref{3.14}), we get
\begin{equation}
\label{3.15}
(1-4(\gamma-1)^2)\int_{\Omega}(\Delta (u+1)^{\gamma})^2+(\frac{8\lambda\gamma^2(\gamma-1)^2}{(4\gamma+p-3)(p+1)}-\frac{\lambda\gamma^2}{q})\int_{\Omega}(u+1)^{p+q}\leq C.
\end{equation}
If $(1-4(\gamma-1)^2)>0$, $p(1-4(\gamma-1)^2)+\frac{8\gamma^2(\gamma-1)^2}{(4\gamma+p-3)(p+1)}-\frac{\gamma^2}{q}>0$ and $u$ is a stable solution to the equation (\ref{1.4}), we have
\begin{equation*}
(p(1-4(\gamma-1)^2)+\frac{8\gamma^2(\gamma-1)^2}{(4\gamma+p-3)(p+1)}-\frac{\gamma^2}{2\gamma-1})\int_{\Omega}(u+1)^{p+q} \leq \frac{C}{\lambda}.
\end{equation*}
This leads to $u+1\in L^{p+q}$.

 If $p+q > \frac{(p-1)n}{4}$, then classical regularity theory implies that $u\in L^{\infty}(\Omega)$.

Therefore we have established the bound of extremal solutions of
(\ref{1.4})  if
$$ p(1-4(\gamma-1)^2)+\frac{8\gamma^2(\gamma-1)^2}{(4\gamma+p-3)(p+1)}-\frac{\gamma^2}{q}>0 $$
and
$$p<\frac{8\gamma+n-4}{n-4}.$$

By Lemma \ref{lemma5.1} and Theorem 3.8 of \cite{wy}, we prove the
extremal solution $u^*$, the unique solution of equation (\ref{1.4})
(where $\lambda=\lambda^*$) is bounded provided that
\begin{enumerate}
  \item $n\leq 8$, $p>1$,
  \item $9\leq n\leq 19$, there exists $\varepsilon_n>0$ such that for any $1<p<\frac{n}{n-8}+\varepsilon_n,$
  \item $n \geq 20$,  $1<p<1+\frac{8p^*}{n-4}$.\quad (${p^*}$ is defined as before )
\end{enumerate}
\vspace{1.5cm}

\setcounter{equation}{0}
\section{appendix}
In this appendix, we study the following inequalities
\begin{equation}
\label{5.1}
p(1-4(\gamma-1)^2)-\frac{\gamma^2}{2\gamma-1}+\frac{8\gamma^2(\gamma-1)^2}{(4\gamma-3+p)(p+1)}>0,
\end{equation}
\begin{equation}
\label{5.2}
p<\frac{8\gamma+n-4}{n-4}.
\end{equation}
In order to get a better range of the power $p$ from (\ref{5.1}) and (\ref{5.2}), it is necessary for us to study the following equation (Letting $p=\frac{8\gamma+n-4}{n-4}$ in (\ref{5.1})):
\begin{equation}
\label{5.3}
\frac{8\gamma+n-4}{n-4}(1-4(\gamma-1)^2)-\frac{\gamma^2}{2\gamma-1}+
\frac{8\gamma^2(\gamma-1)^2}{(4\gamma-3+\frac{8\gamma+n-4}{n-4})(\frac{8\gamma+n-4}{n-4}+1)}=0.
\end{equation}
We can only consider the behavior of (\ref{5.3}) for $\gamma\in(1,\frac{3}{2})$.
Through tedious computations, we see the following equation which appeared in the introduction is the simplified form of (\ref{5.3}). As a consequence, they have same roots in $(1,\frac{3}{2}):$
\begin{align}
\label{5.4}
& 512(2-n)\gamma^6+4(n^3-60n^2 +670n-1344)\gamma^5-2(13n^3-424n^2+3064n-5408)\gamma^4\nonumber\\
& +2(27n^3-572n^2 + 3264n-5440)\gamma^3- (49n^3-772n^2 +3776n-5888)\gamma^2\nonumber\\
& +4(5n^3-66n^2+288n-416)\gamma-3(n^3 -12n^2 + 48 n - 64) = 0.
\end{align}
We denote the left hand side of the equation (\ref{5.3}) by
$h(\gamma)$. Notice that if $\gamma = \frac{n-4}{n-8}$, then $p =
\frac{n}{n-8}$ and $\gamma - 1 = \frac{4}{n-8}$. Hence
$$h(\frac{n-4}{n-8}) = \frac{8}{n-8}[ n^4 - 18 n^3 - 56 n^2 + 384 n - 512].$$  In fact, if $n = 20$, then $h(\frac{4}{3}) = 512 > 0$.
On the other hand, it is also easy to see that $h(\frac{3}{2})<0,$
while it is obvious that
$(4\gamma-3+\frac{8\gamma+n-4}{n-4})(\frac{8\gamma+n-4}{n-4}+1)>0$
and $(2\gamma-1)>0$ when $\gamma \in (\frac{n-4}{n-8},\frac{3}{2})$.
Therefore, by continuity, equation (\ref{5.3}) possesses a root in
$(\frac{n-4}{n-8},\frac{3}{2}).$ We denote the smallest root of
(\ref{5.3}) which is greater than $\frac{n-4}{n-8}$ by $p^*$. Once
we pick out a $\gamma$ from the interval $(\frac{n-4}{n-8},p^*)$,
$h(\gamma)$ is of course positive. By continuity, we can find a
small positive number $\delta$ such that, the inequality
$p(1-4(\gamma-1)^2)-\frac{\gamma^2}{2\gamma-1}+\frac{8\gamma^2(\gamma-1)^2}{(4\gamma-3+p)(p+1)}>0$
holds when $p
\in(\frac{8\gamma+n-4}{n-4}-\delta,\frac{8\gamma+n-4}{n-4})$. So, we
conclude that when $\gamma$ runs in the whole interval
$(\frac{n-4}{n-8},p^*)$, the power $p$ can be in the whole interval
$(\frac{n}{n-8},1+\frac{8p^*}{n-4}).$  We summarize the result as
follows:
\begin{lemma}
\label{lemma5.1}
When $n\geq20$, we have $p$ which satisfies (\ref{5.1}) and (\ref{5.2}) can range in $(\frac{n}{n-8},1+\frac{8p_n}{n-4})$ and this interval is not empty.
\end{lemma}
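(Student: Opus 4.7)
The plan is to turn the problem into a one-variable continuity question. Define
\begin{equation*}
h(\gamma) := \tfrac{8\gamma+n-4}{n-4}\bigl(1-4(\gamma-1)^2\bigr)-\tfrac{\gamma^2}{2\gamma-1}+\tfrac{8\gamma^2(\gamma-1)^2}{(4\gamma-3+\tfrac{8\gamma+n-4}{n-4})(\tfrac{8\gamma+n-4}{n-4}+1)},
\end{equation*}
i.e., the left-hand side of (5.1) after the substitution $p=\tfrac{8\gamma+n-4}{n-4}$ saturating (5.2). Then $h$ is continuous on $(1,\tfrac{3}{2}]$, and the lemma will follow from an intermediate value argument locating a zero $p^*\in(\tfrac{n-4}{n-8},\tfrac{3}{2})$ together with a perturbation in $p$.

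First, I would verify the two endpoint signs. At $\gamma=\tfrac{n-4}{n-8}$, one has $p=\tfrac{n}{n-8}$ and $\gamma-1=\tfrac{4}{n-8}$, and direct substitution (as recorded in the excerpt) collapses $h(\tfrac{n-4}{n-8})$ to $\tfrac{8}{n-8}Q(n)$ with $Q(n)=n^4-18n^3-56n^2+384n-512$. A short check shows $Q(20)=768>0$, and writing $Q(n)=n^3(n-18)-56n^2+384n-512$ one sees that the leading positive part $n^3(n-18)$ grows like $n^4$ while the subtracted $56n^2$ is of lower order, so $Q$ is increasing for $n\geq 20$; hence $h(\tfrac{n-4}{n-8})>0$ for all $n\geq 20$. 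At the other endpoint $\gamma=\tfrac{3}{2}$, the factor $1-4(\gamma-1)^2$ vanishes, killing the first summand, and the remaining expression is $-\tfrac{(3/2)^2}{2}+\tfrac{8\cdot(9/4)\cdot(1/4)}{(3+p_0)(p_0+1)}$ with $p_0=\tfrac{n+8}{n-4}$, which is visibly negative once $n$ is large. Continuity and the intermediate value theorem then produce at least one root of $h$ in $(\tfrac{n-4}{n-8},\tfrac{3}{2})$; I let $p^*$ denote the smallest such root, and by the very definition of smallest root $h(\gamma)>0$ on $(\tfrac{n-4}{n-8},p^*)$.

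Next I would turn this into a statement about the pair $(\gamma,p)$. Fix any $\gamma\in(\tfrac{n-4}{n-8},p^*)$ and set $p_\gamma=\tfrac{8\gamma+n-4}{n-4}$, so $h(\gamma)$ is exactly the left-hand side of (5.1) at $(p_\gamma,\gamma)$. Since that left-hand side depends continuously on $p$ and is strictly positive at $p=p_\gamma$, there is a neighbourhood $(p_\gamma-\delta_\gamma,p_\gamma]$ on which (5.1) remains strict; the strict inequality (5.2) is preserved for any $p<p_\gamma$ by construction. As $\gamma$ ranges over $(\tfrac{n-4}{n-8},p^*)$, the value $p_\gamma$ sweeps the open interval $\bigl(\tfrac{n}{n-8},\,1+\tfrac{8p^*}{n-4}\bigr)$ continuously, and these small one-sided neighbourhoods union to cover exactly this interval; so every $p$ in it is admissible. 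Non-emptiness of the interval is automatic since $p^*>\tfrac{n-4}{n-8}$ gives $1+\tfrac{8p^*}{n-4}>\tfrac{n}{n-8}$.

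The main obstacle is really just the sign check $h(\tfrac{n-4}{n-8})>0$ for $n\geq 20$: the algebraic collapse to the quartic $Q(n)$ is the only non-trivial computation, since everything else is an intermediate value theorem plus a continuous perturbation. The other possible pitfall is showing that the range of $p_\gamma$ as $\gamma$ varies genuinely covers the whole interval, but this is immediate from the strict monotonicity of $\gamma\mapsto\tfrac{8\gamma+n-4}{n-4}$.
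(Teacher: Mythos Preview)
Your proof is correct and follows the paper's own argument almost verbatim: substitute $p=\frac{8\gamma+n-4}{n-4}$ into (5.1) to define $h(\gamma)$, check the endpoint signs $h(\frac{n-4}{n-8})>0$ and $h(\frac{3}{2})<0$, apply the intermediate value theorem to obtain $p^*$, and then perturb in $p$ by continuity to cover the interval. The only loose phrase is ``visibly negative once $n$ is large'' for $h(\tfrac{3}{2})$ --- you should note it is negative for every $n\ge 20$ (indeed every $n>4$), which is immediate since $p_0=\frac{n+8}{n-4}>1$ gives $(3+p_0)(p_0+1)>8>4$ and hence $-\tfrac{9}{8}+\tfrac{9/2}{(3+p_0)(p_0+1)}<0$.
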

\vspace{2cm}

\noindent
{\bf Acknowledgments:} The first  author was supported from an Earmarked grant (``On Elliptic Equations with Negative Exponents'') from RGC of Hong Kong.

\bibliographystyle{plain}

\end{document}